\theoremstyle{definition} \newtheorem{theorem}{Theorem}
\theoremstyle{definition} 
\theoremstyle{definition} \newtheorem{lemma}{Lemma}
\theoremstyle{definition} 
\theoremstyle{definition} \newtheorem{proposition}{Proposition}
\title{The Maximal Graph Dirichlet Problem in Semi-Euclidean Spaces}
\date{}
\author{Benjamin Stuart Thorpe}
\begin{document}

\maketitle

\begin{abstract}
The maximal graph Dirichlet problem asks whether there exists a spacelike graph, in a semi-Euclidean space, with a given boundary and with mean curvature everywhere zero. We prove the existence of solutions to this problem under certain assumptions on the given boundary. Most importantly, the results proved here will hold for graphs of codimension greater than $1$. 
\end{abstract}

\section{Introduction}
\label{sec:Introduction}
Given a system of partial differential equations, a domain and a function defined on its boundary, a \emph{Dirichlet problem} is the question of whether there exists a solution to the system, on this domain, with the given boundary values. It is well known that a submanifold of a Euclidean space with mean curvature zero everywhere is called \emph{minimal}. For a submanifold that can be written as a graph over some domain, the mean curvature zero condition is equivalent to a second order, elliptic, quasilinear system of partial differential equations. We can therefore consider the Dirichlet problem for the minimal graph system, which is related to the variational problem of minimizing the volume functional for graphs with a given boundary.

For semi-Euclidean spaces (e.g. Minkowski space, which is well known from Relativity), we are again interested in submanifolds with mean curvature zero, but now we also assume that these submanifolds are \emph{spacelike} (have positive definite induced metric). We call such submanifolds \emph{maximal}. For graphs, the spacelike condition is equivalent to a bound on the gradient, and the mean curvature zero condition is again an elliptic, quasilinear system of equations (ellipticity comes from the spacelike condition). In this case we can consider a maximal graph Dirichlet problem, which is related to the problem of \emph{maximizing} the volume functional for spacelike graphs with a given boundary.    

In Minkowski space, the codimension $1$ maximal graph Dirichlet problem was first dealt with in \cite{FJF}, by using boundary conditions similar to those used in the minimal graph case (see \cite{JS}). A more general existence theorem was then proved in \cite{BS}. Both of these results are for codimension 1 graphs, where the mean curvature zero condition is a single equation. The proofs involve using the assumptions on the boundary to obtain gradient estimates, which then give existence of solutions by the usual methods for elliptic problems (using Schauder fixed point theorem, as in chapter 11 of \cite{GT}).

For graphs of higher codimension, these problems are more difficult since we have to consider systems of equations. In \cite{LO}, Lawson and Osserman give an example which shows that the minimal graph problem is sometimes not solvable for codimension greater than $1$, even for very `nice' domains and boundary data. But there are still some examples of existence theorems for the higher codimension minimal graph Dirichlet problem. For example, we can use the inverse function theorem to prove the existence of solutions whenever the $C^{2,\alpha}$ norm of the boundary data is less than some (unknown) constant.\footnote{See Theorem 4.2 of L~Martinazzi, {\sl The Non-Parametric Problem of Plateau in Arbitrary Codimension}, arXiv:math/0411589v1[math.AP] (Nov 2004).} 

The main result claimed in \cite{MTW2} is more interesting, and roughly says that a solution exists if the domain is convex and the $C^{2}$ norm of the boundary data is less than some known constant (which depends only on the diameter and dimension of the domain). The proof involves mean curvature flow, but the boundary $C^{1,\alpha}$ estimate was overlooked there, as we will explain later (section \ref{TheMinimalGraphProblem}).

For the maximal graph problem with codimension $n \geq 2$ in $\mathbb{R}^{m+n}_{n}$, very little is known. This is the case that we will consider. Unlike the higher codimension existence theorems mentioned above (in the Euclidean case), we will use standard elliptic methods by proving a suitable gradient estimate. However, we will have to deal with the fact that the higher order estimates that hold for single equations do not necessarily hold for systems. For this reason, we will only prove existence theorems either in the case of graphs with dimension $m = 2$, or for $m \geq 2$ when the gradient estimate is sufficiently strong. We will prove that solutions to the maximal graph Dirichlet problem exist whenever the domain is convex and the $C^{2}$ norm of the boundary data is small enough (see Theorem \ref{mainmaxthm} when $m=2$ and Theorem \ref{arbdimcodimexistthm} when $m\geq 2$).

\section{Preliminaries}
\label{sec:Preliminaries}
For integers $m\geq 2$ and $n \geq 1$, we will denote by $\mathbb{R}^{m+n}_{n} = (\mathbb{R}^{m+n}, \left\langle\cdot , \cdot\right\rangle)$ the \emph{semi-Euclidean} space with metric tensor $\left\langle v , w \right\rangle = \sum^{m}_{i=1}v^{i}w^{i} - \sum^{m+n}_{\gamma=m+1}v^{\gamma}w^{\gamma}$. Given a submanifold, we can take the induced metric from $\mathbb{R}^{m+n}_{n}$ in the usual way, and we say that the submanifold is \emph{spacelike} if the induced metric is positive definite. As always, we define the second fundamental form by $B(v, w) = \bar{\nabla}_{v}w - \nabla_{v}w$ for tangent vector fields $v$ and $w$, where $\bar{\nabla}$ and $\nabla$ are the Levi-Civita connections on $\mathbb{R}^{m+n}_{n}$ and the submanifold respectively. Taking the trace of $B$ with respect to the induced metric $g$ gives the \emph{mean curvature} $H$ on the submanifold (which will be a normal vector field). We can even define the  gradient, divergence and induced Laplace operator on such submanifolds in the usual way (see \cite{BO} for details).

We will consider submanifolds that can be written as graphs over a domain $\Omega$ in $\mathbb{R}^{m}$, $M = \{(x,u(x))\in \mathbb{R}^{m+n}_{n} \; | \; x\in\Omega\}$ for some smooth $u: \Omega\rightarrow \mathbb{R}^{n}$. The induced metric on the graph will be given by the matrix $g = I - Du^{T}Du$. It will be convenient for us to use the following norms for the maps $Du(x):\mathbb{R}^{m}\rightarrow \mathbb{R}^{n}$ and $D^{2}u(x):\mathbb{R}^{m}\times\mathbb{R}^{m}\rightarrow\mathbb{R}^{n}$,
\begin{eqnarray}
|||Du|||(x) = \sup_{|v|=1}|Du(x)(v)|  & \textrm{ and } & |||D^{2}u|||(x) = \sup_{|v|=1}|D^{2}u(x)(v,v)|,\nonumber
\end{eqnarray}
where $|\cdot|$ denotes the usual Euclidean norm. It is possible to show that $|||Du|||^{2}$ will be equal to the largest eigenvalue of $Du^{T}Du$ at each point, and that $|||Du||| \leq |Du| \leq \sqrt{m}|||Du|||$. Using the obvious relationship between $|||Du|||$ and the eigenvalues of $g$, we see that the graph will be spacelike if and only if $|||Du||| < 1$. Also, for any $0<C<1$, we have $\sqrt{\det g} \geq C  \Rightarrow  |||Du|||^{2} \leq 1 - C^{2}$ and $|||Du|||^{2} \leq C  \Rightarrow  \sqrt{\det g} \geq (1 - C)^{m/2}$.

As in the well-known Euclidean case, it is easy to check that the mean curvature vector of a graph is given by applying the induced Laplace operator on the graph (which we denote by $\Delta_{M}$) to the vector $(x,u(x))$. This tells us that the mean curvature vector is zero if and only if $g^{ij}(Du)\partial^{2}u/\partial x^{i}\partial x^{j} = 0$.\footnote{We denote by $g_{ij} = \delta_{ij} - (\partial u^{\gamma}/\partial x^{i})(\partial u^{\gamma}/\partial x^{j})$ the components of $g$, and by $g^{ij}$ the components of its inverse. We always use the summation convention over repeated indices $i, j,\ldots \in\{1, \ldots, m\}$  and $\gamma,\nu,\ldots \in \{m+1, \ldots, m+n\}$.} This is a quasilinear elliptic system of $n$ equations for $u$. Given a bounded domain $\Omega$ in $\mathbb{R}^{m}$ and boundary data $\phi:\partial\Omega\rightarrow\mathbb{R}^{n}$, we would therefore like to prove the existence of a smooth solution to the following Dirichlet problem:
\begin{eqnarray}
g^{ij}(Du)\frac{\partial^{2}u}{\partial x^{i}\partial x^{j}} = 0 \textrm{ and } |||Du||| < 1 \textrm{ in } \Omega, & u = \phi \textrm{ on } \partial\Omega,\nonumber
\end{eqnarray}  
where $u$ is at least $C^{2}$ in $\Omega$ and $C^{0}$ on $\bar{\Omega}$. In particular, we will look for solutions in H\"older spaces. For $\alpha \in (0,1)$ we say that a function $u:\Omega \rightarrow \mathbb{R}^{n}$ lies in the H\"older space $C^{k,\alpha}(\bar{\Omega};\mathbb{R}^{n})$ if and only if it is in $C^{k}(\bar{\Omega};\mathbb{R}^{n})$ and $||u||_{k,\alpha}  =  ||u||_{k} + \sup_{x\neq y}  |D^{k}u(x) - D^{k}u(y)|/|x - y|^{\alpha} =  ||u||_{k} + [D^{k}u]_{\alpha}$ is finite, where $||u||_{k}$ is the usual $C^{k}$ norm. Note that, with the norm $||\cdot||_{k,\alpha}$, the space $C^{k,\alpha}(\bar{\Omega};\mathbb{R}^{n})$ will be a Banach space. We will sometimes just call functions in these spaces $C^{k,\alpha}$ functions. Functions that are $C^{k,\alpha}$ on compact subsets of a domain will be called \emph{locally} $C^{k,\alpha}$ on the domain. 

We will need the following fact, which uses Leray-Schauder fixed point theorem to get existence of solutions to our problem under the assumption of some a priori estimates. The proof is similar to Theorem 11.4 of \cite{GT}, but slightly more difficult here since $g^{ij}(Dw)$ is only positive definite when the graph of $w$ is spacelike. This is why we make a more complicated assumption on the gradient and why we use the set $R$, to avoid non-spacelike graphs. 

\begin{lemma}\label{firstlemma}
For some $\alpha \in (0,1)$, let $\Omega$ be a bounded $C^{2,\alpha}$ domain in $\mathbb{R}^{m}$ and let $\phi \in C^{2,\alpha}(\bar{\Omega}; \mathbb{R}^{n})$. Suppose that there exist constants $\kappa \in (0,1)$ and $C>0$ such that $\sup_{\Omega}|||D\phi|||^{2} \leq 1- \kappa$, and such that
\begin{eqnarray}
\sup_{\Omega}|||Du|||^{2} < 1 - \kappa & \textrm{and} & ||u||_{1,\alpha}\leq C\nonumber
\end{eqnarray}
whenever $u \in C^{2,\alpha}(\bar{\Omega};\mathbb{R}^{n})$ gives a maximal graph in $\mathbb{R}^{m+n}_{n}$ with  $\sup_{\Omega}|||Du|||^{2} \leq 1 - \kappa$ and $u|_{\partial\Omega} = \sigma\phi|_{\partial\Omega}$ for some $\sigma \in [0,1]$. Then there exists a solution $u\in C^{2,\alpha}(\bar{\Omega};\mathbb{R}^{n})$ to the maximal graph Dirichlet problem in $\mathbb{R}^{m+n}_{n}$ with $u|_{\partial\Omega} = \phi|_{\partial\Omega}$.
\end{lemma}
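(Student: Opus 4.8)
The plan is to realize a solution as a fixed point of a compact map built from the linearized (frozen-coefficient) problem, and then to locate that fixed point by Leray-Schauder degree theory, using the a priori assumption precisely to keep every relevant fixed point inside the spacelike region. First I would pass to a globally defined, uniformly elliptic modification of the coefficients. The operator-norm ball $K=\{p\in\mathbb{R}^{n\times m}:|||p|||^{2}\le 1-\kappa\}$ is compact and convex, so letting $\rho\colon\mathbb{R}^{n\times m}\to K$ be the $1$-Lipschitz nearest-point projection and $\bar{g}^{ij}(p)$ the $(i,j)$-entry of $(I-\rho(p)^{T}\rho(p))^{-1}$, the functions $\bar{g}^{ij}$ are Lipschitz on $\mathbb{R}^{n\times m}$, coincide with $g^{ij}(p)$ whenever $|||p|||^{2}\le 1-\kappa$, and (because for $p\in K$ the eigenvalues of $I-p^{T}p$ lie in $[\kappa,1]$) are uniformly elliptic with constants depending only on $\kappa$ and $m$. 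Taking $\mathfrak{B}=C^{1,\alpha}(\bar{\Omega};\mathbb{R}^{n})$, for $(v,\sigma)\in\mathfrak{B}\times[0,1]$ I would let $T(v,\sigma)=u$ be the unique solution of the linear problem $\bar{g}^{ij}(Dv)\,\partial^{2}u/\partial x^{i}\partial x^{j}=0$ in $\Omega$, $u=\sigma\phi$ on $\partial\Omega$; this decouples into $n$ scalar uniformly elliptic equations with $C^{0,\alpha}(\bar{\Omega})$ coefficients and no zeroth-order term, so linear elliptic theory (using $\partial\Omega\in C^{2,\alpha}$ and $\phi\in C^{2,\alpha}$) provides a unique $u\in C^{2,\alpha}(\bar{\Omega};\mathbb{R}^{n})$ with $\|u\|_{2,\alpha}$ controlled by $\|\bar{g}(Dv)\|_{0,\alpha}$ and $\|\phi\|_{2,\alpha}$. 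One then checks, in the standard way, that $T$ is continuous, that $T$ is compact (it sends bounded sets into bounded subsets of $C^{2,\alpha}(\bar{\Omega};\mathbb{R}^{n})$, which are precompact in $\mathfrak{B}$ by applying Arzela-Ascoli to the first derivatives), and that $T(\cdot,0)\equiv 0$ by the maximum principle; and a fixed point of $T(\cdot,\sigma)$ is exactly a $u\in C^{2,\alpha}(\bar{\Omega};\mathbb{R}^{n})$ solving $\bar{g}^{ij}(Du)\,\partial^{2}u/\partial x^{i}\partial x^{j}=0$ in $\Omega$ with $u|_{\partial\Omega}=\sigma\phi|_{\partial\Omega}$.

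The decisive step is to exhibit a bounded open set on whose boundary $T(\cdot,\sigma)$ has no fixed point for any $\sigma\in[0,1]$. I would take
\[
U=\bigl\{v\in\mathfrak{B}:\ \|v\|_{1,\alpha}<C+1\ \text{ and }\ \sup_{\Omega}|||Dv|||^{2}<1-\kappa\bigr\},
\]
which is bounded and open and contains $0$; this set, together with the modification $\bar{g}$, is the device that confines the whole argument to spacelike graphs and plays the role of the auxiliary set $R$. Suppose $u=T(u,\sigma)$ with $u\in\partial U$. Then in particular $\sup_{\Omega}|||Du|||^{2}\le 1-\kappa<1$, so the graph of $u$ is spacelike and $\bar{g}^{ij}(Du)=g^{ij}(Du)$; hence $u$ gives a maximal graph in $\mathbb{R}^{m+n}_{n}$ with $\sup_{\Omega}|||Du|||^{2}\le 1-\kappa$ and $u|_{\partial\Omega}=\sigma\phi|_{\partial\Omega}$, so the hypothesis forces $\sup_{\Omega}|||Du|||^{2}<1-\kappa$ strictly and $\|u\|_{1,\alpha}\le C<C+1$; that is, $u\in U$, contradicting $u\in\partial U$. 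Therefore the Leray-Schauder degree $\deg(I-T(\cdot,\sigma),U,0)$ is well defined, and by homotopy invariance it is independent of $\sigma$; at $\sigma=0$ it equals $\deg(I,U,0)=1$, so $\deg(I-T(\cdot,1),U,0)=1\ne 0$ and $T(\cdot,1)$ has a fixed point $u\in U$. For this $u$ one has $\sup_{\Omega}|||Du|||^{2}<1-\kappa<1$, whence $\bar{g}^{ij}(Du)=g^{ij}(Du)$, and $u$ solves $g^{ij}(Du)\,\partial^{2}u/\partial x^{i}\partial x^{j}=0$ with $|||Du|||<1$ in $\Omega$ and $u|_{\partial\Omega}=\phi|_{\partial\Omega}$; since $u\in C^{2,\alpha}(\bar{\Omega};\mathbb{R}^{n})$ by construction, this is the required solution.

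The one genuinely delicate point is the absence of fixed points on $\partial U$, and this is exactly where the strict inequality $\sup_{\Omega}|||Du|||^{2}<1-\kappa$ in the hypothesis is indispensable: a bound with ``$\le$'' would not contradict membership in $\partial U$. The underlying obstacle is, of course, that $g^{ij}$ is defined and elliptic only where $|||Du|||<1$, which is why the modification $\bar{g}$ and the set $U$ are needed at all. As an alternative to the degree argument one could run the method of continuity: the set of $\sigma\in[0,1]$ for which the problem with data $\sigma\phi$ has a solution with $\sup_{\Omega}|||Du|||^{2}<1-\kappa$ contains $0$, is closed by the a priori $C^{1,\alpha}$ bound together with interior and boundary Schauder estimates, and is open because the linearization of $u\mapsto g^{ij}(Du)\,\partial^{2}u/\partial x^{i}\partial x^{j}$ at a spacelike solution is a linear elliptic operator with no zeroth-order term, hence an isomorphism from $\{w\in C^{2,\alpha}(\bar{\Omega};\mathbb{R}^{n}):w|_{\partial\Omega}=0\}$ onto $C^{0,\alpha}(\bar{\Omega};\mathbb{R}^{n})$, to which the implicit function theorem applies.
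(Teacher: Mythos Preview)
Your degree-theoretic argument is correct and follows the same overall Leray--Schauder strategy as the paper, but the two proofs differ in how they keep the frozen-coefficient problem uniformly elliptic. The paper defines a rescaling map $f$ on \emph{functions}, sending $v$ to $(1-\kappa)^{1/2}v/\sup_\Omega|||Dv|||$ whenever $\sup_\Omega|||Dv|||^2>1-\kappa$, and composes this with the linear solver $T$; it then applies the form of Leray--Schauder that asks only for a uniform $C^{1,\alpha}$ bound on fixed points of $\sigma(T\circ f)$, and rules out the case $\sup|||Dv|||^2>1-\kappa$ by a short contradiction. You instead modify the \emph{coefficients} by projecting the gradient matrix pointwise onto $K$ before inverting, and run degree theory on the open set $U$ that encodes both the $C^{1,\alpha}$ ball and the strict gradient constraint. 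Both devices serve the same purpose; your formulation makes the role of the strict inequality in the hypothesis especially transparent, while the paper's rescaling avoids having to check that the pointwise projection yields Lipschitz coefficients.

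One caution about your alternative via the method of continuity: the linearization of $u\mapsto g^{ij}(Du)\,\partial^{2}u/\partial x^{i}\partial x^{j}$ at a solution $u$ is the system
\[
w\ \longmapsto\ g^{ij}(Du)\,\frac{\partial^{2}w^{\gamma}}{\partial x^{i}\partial x^{j}}+\frac{\partial g^{ij}}{\partial p^{\nu}_{k}}(Du)\,\frac{\partial^{2}u^{\gamma}}{\partial x^{i}\partial x^{j}}\,\frac{\partial w^{\nu}}{\partial x^{k}},
\]
which couples the components of $w$ through first-order terms. The absence of a zeroth-order term does not by itself yield a maximum principle for such a coupled system, so the asserted isomorphism requires more justification than you give (Fredholm index zero is clear from the diagonal principal part, but triviality of the kernel is not automatic). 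This does not affect your main argument.
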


\begin{proof}
We let $R = \{u \in C^{1,\alpha}(\bar{\Omega};\mathbb{R}^{n}) \;| \; |||Du|||^{2} \leq 1 - \kappa\}$ and we define maps $f:C^{1,\alpha}(\bar{\Omega};\mathbb{R}^{n})\rightarrow R$ and $T:R\rightarrow C^{1,\alpha}(\bar{\Omega};\mathbb{R}^{n})$. For any $v\in C^{1,\alpha}(\bar{\Omega};\mathbb{R}^{n})$, we take $f(v)$ to be equal to $v$ when $\sup_{\Omega}|||Dv|||^{2} \leq 1 - \kappa$ and equal to $(1-\kappa)^{1/2}v/\sup_{\Omega}|||Dv|||$ otherwise. For any $w \in R$, we define $T(w)$ to be the unique solution $u$ to the system of $n$ linear Dirichlet problems given by $g^{ij}(Dw)\partial^{2}u/\partial x^{i}\partial x^{j} = 0$ in $\Omega$ with $u = \phi$ on $\partial\Omega$. Since $w \in R$ implies that the system is elliptic and that the coefficients $g^{ij}(Du)$ are $C^{0,\alpha}$ functions, we know that such a solution must exist in $C^{2,\alpha}(\bar{\Omega};\mathbb{R}^{n})$ by the usual existence theorem for linear equations (see Theorem 6.14 of \cite{GT}).

We claim that $\tilde{T} = T\circ f : C^{1,\alpha}(\bar{\Omega};\mathbb{R}^{n}) \rightarrow C^{1,\alpha}(\bar{\Omega};\mathbb{R}^{n})$ will be continuous and compact (i.e. the images of bounded sets are precompact).
The map $f$ is continuous and clearly maps bounded sets to bounded sets (with respect to the $C^{1,\alpha}$ norm). By the Schauder estimates (see Theorem 6.6 of \cite{GT}), sets in $R$ with bounded $C^{1,\alpha}$ norm are mapped by $T$ to sets with bounded $C^{2,\alpha}$ norm. But, by the Arzela-Ascoli theorem, bounded sets in $C^{2,\alpha}(\bar{\Omega};\mathbb{R}^{n})$ are precompact in $C^{2}(\bar{\Omega};\mathbb{R}^{n})$ and $C^{1,\alpha}(\bar{\Omega};\mathbb{R}^{n})$. Continuity of $T$ can be proved exactly as in the proof of Theorem 11.4 of \cite{GT}.
          
Now we need to make use of the estimates that we have assumed to exist. Suppose that, for $\sigma \in [0,1]$, we have $v \in C^{1,\alpha}(\bar{\Omega};\mathbb{R}^{n})$ with $\sigma \tilde{T}(v)=v$. We have two possible cases. First, if $\sup_{\Omega}|||Dv|||^{2} > 1 - \kappa$ then we have $\sigma T (v\sqrt{1-\kappa} / \sup_{\Omega}|||Dv|||) = v$, so $w = v\sqrt{1-\kappa}  / \sup_{\Omega}|||Dv|||$ solves the maximal graph Dirichlet problem with $w = (\sigma\sqrt{1-\kappa}/\sup_{\Omega}|||Dv|||)\phi$ on the boundary. But $(\sigma\sqrt{1-\kappa}/\sup_{\Omega}|||Dv|||) \in [0,1]$, so the assumptions that we make here imply that $\sup_{\Omega}|||Dw|||^{2} < 1 - \kappa$, which contradicts the fact that $\sup_{\Omega}|||Dw|||^{2} = 1 - \kappa$. Therefore we only need to consider the case $\sup_{\Omega}|||Dv|||^{2} \leq 1 - \kappa$, where $v$ will be a fixed point of $\sigma T$ and will be a solution of the maximal graph Dirichlet problem with boundary values $\sigma \phi$. Our assumptions now imply that $||v||_{1,\alpha} \leq C$.

We conclude that $\tilde{T}$ is a compact map from the Banach space $C^{1,\alpha}(\bar{\Omega};\mathbb{R}^{n})$ into itself and, for any $\sigma \in [0,1]$, any fixed point $v$ of $\sigma\tilde{T}$ satisfies $||v||_{1,\alpha} \leq C$. A version of the Schauder fixed point theorem (see Theorem 11.3 of \cite{GT}) tells us that $\tilde{T}$ has a fixed point. As explained above, but now just taking $\sigma=1$, this fixed point must have $|||Du|||^{2}<1-\kappa$. This will be a spacelike solution of the maximal graph Dirichlet problem with boundary values given by $\phi$.
\end{proof}

It is important to note that any $C^{2,\alpha}$ solution to a maximal graph Dirichlet problem (as given by the lemma) will be smooth on $\bar{\Omega}$ if the domain and boundary data are both smooth. This is proved by induction using Theorem 6.19 of \cite{GT}, which says that if $u$ is a $C^{k,\alpha}$ solution then the coefficients $g^{ij}(Du)$ are $C^{k-1,\alpha}$ and therefore $u$ must be $C^{k+1,\alpha}$.\footnote{Instead of 6.19, we could have used Theorem 6.17 of \cite{GT}, which says that a \emph{locally} $C^{k,\alpha}$ solution will be locally $C^{k+1,\alpha}$. This can be used even when the domain and data are not smooth, but only gives smoothness of solutions on the interior.} We also note that any solution with boundary data $\phi$ will have $|u|$ uniformly bounded in terms of $\sup_{\Omega}|\phi|$. This follows directly from the elliptic maximum principle.
 
\section{Gradient Estimate}
\label{sec:GradientEstimate}
 
Given some $\kappa \in (0,1)$, we will find conditions on $\Omega \subset \mathbb{R}^{m}$ and $\phi : \bar{\Omega}\rightarrow \mathbb{R}^{n}$ such that any smooth solution to the corresponding maximal graph Dirichlet problem with $\sup_{\Omega}|||Du|||^{2} \leq 1 - \kappa$ must satisfy $\sup_{\Omega}|||Du|||^{2} < 1 - \kappa$. We will assume that $\Omega$ and $\phi$ are both $C^{2}$, and that $\Omega$ is bounded and convex. We follow the method from \cite{MTW2}, but note that we have an elliptic (rather than parabolic) system here. It is also important to note that the inequalities that hold for maximal graphs are slightly different to the corresponding inequalities for minimal graphs, so $\kappa$ will appear in our estimates in a different way, affecting the assumptions that we need to make.  

Given such a solution $u$, the first step is to define the linear elliptic operator $L = g^{ij}(Du)\partial^{2}/\partial x^{i}\partial x^{j}$. Then, fixing any $\gamma \in \{m+1,\ldots,m+n\}$ and any $p\in\partial\Omega$, we define functions $S^{\pm}:\bar{\Omega} \rightarrow \mathbb{R}$ by $S = \nu\log(1 + \zeta d) \mp (u^{\gamma} - \phi^{\gamma})$. Here $d(x)$ is the distance from any point $x\in\bar{\Omega}$ to the $(m-1)$-dimensional hyperplane tangent to $\partial\Omega$ in $\mathbb{R}^{m}$ at $p$. The positive constants $\nu$ and $\zeta$ will be chosen later. Using the obvious facts that $Lu = 0 = Ld$, $d(x) \leq |x - p| \leq \textrm{diam}\Omega$, $|Dd| = 1$ and that the eigenvalues of $g^{-1}$ are between $1$ and $1/\kappa$ (by the assumed gradient bound), we have
\begin{equation}
LS^{\pm} = \frac{-\nu\zeta^{2}}{(1 + \zeta d)^{2}}g^{ij}\frac{\partial d}{\partial x^{i}}\frac{\partial d}{\partial x^{j}} \pm L\phi^{\gamma}\leq \frac{-\nu\zeta^{2}}{(1 + \zeta\textrm{diam}\Omega)^{2}} + \frac{m}{\kappa}|||D^{2}\phi|||.\label{constraint}
\end{equation}
If we assume that the right hand side is non-positive, then $LS^{\pm} \leq 0$ and we can apply the elliptic maximum principle to see that the infimum of $S^{\pm}$ occurs on the boundary. But it is clear that $S^{\pm}\geq 0$ on the boundary (by the Dirichlet condition), so we have $S^{\pm} \geq 0$ on all of $\Omega$ and therefore $\nu\log(1 + \zeta d) \geq |u^{\gamma} - \phi^{\gamma}|$. From here, we can directly follow the steps in \cite{MTW2} to get a gradient estimate of the form $|||Du||| \leq \nu\zeta + 2|||D\phi|||$ at the point $p$, and hence at any boundary point. To minimize $\nu\zeta$ in such a way that the right hand side of (\ref{constraint}) is non-positive, we take $\zeta = 1/\textrm{diam}\Omega$ and $\nu\zeta = 4m\textrm{diam}\Omega\sup_{\Omega}|||D^{2}\phi|||/\kappa$. With this choice of constants, we have the boundary estimate
\begin{equation}
\sup_{\partial\Omega}|||Du|||\leq \frac{4m\textrm{diam}\Omega}{\kappa}\sup_{\Omega}|||D^{2}\phi||| + 2\sup_{\partial\Omega}|||D\phi|||.\nonumber
\end{equation}
If the right hand side is small enough, then this will give a gradient estimate on the full domain. We see this by using inequality 4.6 of \cite{LS}, which implies that $\Delta_{M}\log\sqrt{\det g} \leq 0$. Applying the maximum principle to this tells us that $\sqrt{\det g} \geq \inf_{\partial\Omega}\sqrt{\det g}$ on $\Omega$. Therefore, if $|||Du|||^{2}< 1-\kappa^{1/m}$ on $\partial\Omega$ then $\sqrt{\kappa}< \inf_{\partial\Omega}\sqrt{\det g} \leq \sqrt{\det g}$, which implies that $|||Du|||^{2} < 1 - \kappa$ on $\Omega$. 

\begin{proposition}
Let $\Omega$ be a bounded, convex, $C^{2}$ domain in $\mathbb{R}^{m}$ and let $\phi:\bar{\Omega}\rightarrow \mathbb{R}^{n}$ be a $C^{2}$ function. Assume, for some $\kappa\in(0,1)$, that $\phi$ satisfies
\begin{equation}
\frac{4m\textrm{\emph{diam}}\Omega}{\kappa}\sup_{\Omega}|||D^{2}\phi||| + 2\sup_{\partial\Omega}|||D\phi||| < \sqrt{1 - \kappa^{1/m}}. \label{boundaryassumpt}
\end{equation}
If $u$ is a smooth solution of the corresponding maximal graph Dirichlet problem in $\mathbb{R}^{m+n}_{n}$, and if $\sup_{\Omega}|||Du|||^{2} \leq 1 - \kappa$, then $\sup_{\Omega}|||Du|||^{2} < 1 - \kappa$.   
\end{proposition}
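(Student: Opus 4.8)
The plan is to assemble the proposition from the two maximum-principle arguments already sketched in the paragraphs above: a barrier construction giving a boundary gradient bound, followed by propagation of that bound into the interior via the superharmonicity of $\log\sqrt{\det g}$ on the maximal graph. So the proposition is really a clean restatement of the preceding discussion, and I would write the proof as a brief recapitulation rather than anything new.

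First I would fix $\gamma\in\{m+1,\dots,m+n\}$ and $p\in\partial\Omega$ and set up the barrier $S^\pm=\nu\log(1+\zeta d)\mp(u^\gamma-\phi^\gamma)$, where $d$ is the distance to the hyperplane tangent to $\partial\Omega$ at $p$; convexity of $\Omega$ is exactly what guarantees $d\ge 0$ on $\bar\Omega$ with $d(p)=0$, and $d$ being an affine function gives $Ld=0$ and $|Dd|=1$. Using $Lu=0$, these facts, and the bound that the eigenvalues of $g^{-1}$ lie in $[1,1/\kappa]$ (which is where $\sup_\Omega|||Du|||^2\le 1-\kappa$ enters), I would derive the estimate for $LS^\pm$ in (\ref{constraint}), then choose $\zeta=1/\mathrm{diam}\,\Omega$ and $\nu\zeta=4m\,\mathrm{diam}\,\Omega\sup_\Omega|||D^2\phi|||/\kappa$ so that the right-hand side is non-positive, i.e.\ $LS^\pm\le 0$. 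The elliptic maximum principle together with $S^\pm\ge 0$ on $\partial\Omega$ (the Dirichlet condition) gives $\nu\log(1+\zeta d)\ge|u^\gamma-\phi^\gamma|$ on $\bar\Omega$, and running the argument of \cite{MTW2} from this point upgrades the barrier bound to $|||Du|||(p)\le\nu\zeta+2|||D\phi|||(p)$. Since $p\in\partial\Omega$ was arbitrary, this yields the boundary estimate $\sup_{\partial\Omega}|||Du|||\le\frac{4m\,\mathrm{diam}\,\Omega}{\kappa}\sup_\Omega|||D^2\phi|||+2\sup_{\partial\Omega}|||D\phi|||$.

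Next I would propagate this to the interior. By inequality 4.6 of \cite{LS} one has $\Delta_M\log\sqrt{\det g}\le 0$ on the graph, so the minimum principle gives $\sqrt{\det g}\ge\inf_{\partial\Omega}\sqrt{\det g}$ on $\bar\Omega$. Feeding the boundary estimate into hypothesis (\ref{boundaryassumpt}) gives $\sup_{\partial\Omega}|||Du|||^2<1-\kappa^{1/m}$, and the elementary implication $|||Du|||^2\le C\Rightarrow\sqrt{\det g}\ge(1-C)^{m/2}$ from Section \ref{sec:Preliminaries} then yields $\sqrt{\det g}>\sqrt\kappa$ on the compact set $\partial\Omega$, hence $\sqrt{\det g}>\sqrt\kappa$ throughout $\bar\Omega$. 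Finally the companion implication $\sqrt{\det g}\ge C\Rightarrow|||Du|||^2\le 1-C^2$ converts this into $\sup_\Omega|||Du|||^2<1-\kappa$, which is the assertion.

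The main obstacle is the boundary gradient estimate, specifically the step (borrowed from \cite{MTW2}) that turns the scalar barrier inequality $\nu\log(1+\zeta d)\ge|u^\gamma-\phi^\gamma|$ into a genuine bound on $|||Du|||$ rather than on a single component $u^\gamma-\phi^\gamma$ — this is also where the precise constant $4m$ and the factor $1/\kappa$ (in place of the minimal-graph constants) are forced, so care is needed to land exactly on the left-hand side of (\ref{boundaryassumpt}). Everything downstream — the interior estimate via $\log\sqrt{\det g}$ and the passage through the intermediate threshold $1-\kappa^{1/m}$, which is what makes the strict inequality (\ref{boundaryassumpt}) propagate to a strict conclusion — is a routine application of the maximum principle.
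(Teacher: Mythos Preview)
Your proposal is correct and follows essentially the same approach as the paper: the barrier construction with $S^\pm$ yielding the boundary gradient estimate via (\ref{constraint}) and the choice $\zeta=1/\mathrm{diam}\,\Omega$, $\nu\zeta=4m\,\mathrm{diam}\,\Omega\sup_\Omega|||D^2\phi|||/\kappa$, followed by propagation to the interior through $\Delta_M\log\sqrt{\det g}\le 0$ and the threshold $1-\kappa^{1/m}$, matches the paper's argument step for step. You have also correctly identified that the proposition is simply a packaging of the preceding discussion, and that the one nontrivial ingredient is the passage (borrowed from \cite{MTW2}) from the scalar barrier inequality to the full bound on $|||Du|||$.
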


\section{An Existence Theorem in $\mathbb{R}^{2+n}_{n}$}
\label{sec:AnExistenceTheoremInMathbbR2NN}
The previous section gives us a gradient estimate as required in Lemma \ref{firstlemma}. Since we are only interested in codimension $n \geq 2$, we have a system of equations, and therefore the $C^{1,\alpha}$ estimates used for single equations are not available. But, for $2$-dimensional graphs in $\mathbb{R}^{2+n}_{n}$, we can use strong a priori estimates that hold for \emph{linear} elliptic equations in two variables. In particular, we use the next lemma, which follows directly from a comment on page 304 of \cite{GT}. 

\begin{lemma}\label{2dimest}
Let $\Omega$ be a smooth, bounded domain in $\mathbb{R}^{2}$ and let $\phi\in C^{\infty}(\bar{\Omega})$. Let $L=a^{ij}(x)\partial^{2}/\partial x^{i}\partial x^{j}$, where the matrix given by $a^{ij}(x)\in C^{\infty}(\Omega)$ has positive eigenvalues $\lambda(x) \leq \Lambda(x)$ such that $\Lambda/\lambda \leq\eta$ for some constant $\eta$. Suppose that $u \in C^{2}(\bar{\Omega})$ is a solution of $Lu = 0$ with $u = \phi$ on $\partial\Omega$. Then there exist constants $\alpha(\eta,\Omega)\in(0,1)$ and $C(\eta,\Omega,||\phi||_{2})>0$ such that $||u||_{1,\alpha} \leq C$.
\end{lemma}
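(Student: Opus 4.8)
The plan is to reduce Lemma \ref{2dimest} to the classical interior and boundary estimates for linear uniformly elliptic equations in two independent variables, which give Hölder estimates for the gradient \emph{without} any regularity assumption on the coefficients $a^{ij}$ beyond measurability (this is the two-dimensional phenomenon, going back to Morrey, and is exactly the content of the comment on page 304 of \cite{GT}). So the hypotheses $a^{ij}\in C^\infty$ and $\phi\in C^\infty$ are really only there to guarantee a priori that a classical solution $u\in C^2(\bar\Omega)$ exists; once we have such a $u$, the estimate should depend only on the ellipticity ratio $\eta$, the domain $\Omega$, and the size of the boundary data.

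First I would recall the relevant quantitative statements. For the interior estimate: if $Lu=0$ in $\Omega$ with $L$ uniformly elliptic with ellipticity ratio bounded by $\eta$, then on any subdomain $\Omega'\Subset\Omega$ one has $\|u\|_{C^{1,\alpha}(\Omega')} \leq C(\eta,\Omega',\Omega)\sup_\Omega |u|$, with $\alpha=\alpha(\eta)\in(0,1)$. For the boundary estimate: if in addition $u=\phi$ on $\partial\Omega$ with $\Omega$ a nice (say $C^2$, which a smooth bounded domain certainly is) domain and $\phi\in C^{2}(\bar\Omega)$, then near the boundary $u$ is $C^{1,\alpha}$ up to $\partial\Omega$ with $\|u\|_{C^{1,\alpha}}$ controlled by $\eta$, $\Omega$, and $\|\phi\|_{C^2(\bar\Omega)}$ (the $C^2$ norm of $\phi$, not just its boundary values, is what \cite{GT} uses to flatten the boundary and absorb $\phi$ into the equation). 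Then I would combine the two: cover $\bar\Omega$ by finitely many interior balls and boundary neighborhoods, apply the interior estimate on the former and the boundary estimate on the latter, and use the maximum principle bound $\sup_\Omega|u|\leq\sup_{\partial\Omega}|\phi|\leq\|\phi\|_{2}$ to close the loop. Taking $\alpha$ to be the minimum of the (finitely many, uniform in $\eta$) Hölder exponents produced gives a single $\alpha=\alpha(\eta,\Omega)$ and a single constant $C=C(\eta,\Omega,\|\phi\|_2)$ with $\|u\|_{1,\alpha}\leq C$, which is exactly the claim.

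The main obstacle — or rather the main point requiring care — is making sure the boundary estimate in two dimensions is genuinely available in the form needed, i.e. with \emph{no} continuity hypothesis on the coefficients and with the stated dependence of the constant. This is precisely why the lemma is phrased as following ``directly from a comment on page 304 of \cite{GT}'': that remark extends the interior two-dimensional $C^{1,\alpha}$ estimate (which \cite{GT} proves via the theory of the generalized derivative / Morrey's lemma) to the global estimate up to the boundary for equations in two variables. So the honest content of the proof is just: invoke that remark for the interior part, invoke its up-to-the-boundary counterpart for the boundary part (reducing to homogeneous boundary data by subtracting $\phi$, which changes $Lu=0$ into $L\tilde u = -L\phi$ with a bounded right-hand side controlled by $\|\phi\|_2$ and the ellipticity, a form still covered by the two-dimensional estimates), patch the local estimates together, and insert the maximum principle bound. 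Everything else is routine, and I would not belabor the covering argument or the standard reduction to zero boundary values.
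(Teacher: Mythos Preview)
Your proposal is correct and is essentially the same approach as the paper's: the paper gives no proof at all beyond the sentence ``follows directly from a comment on page 304 of \cite{GT},'' and your write-up is simply an accurate unpacking of what that citation entails (interior two-dimensional Morrey-type estimate, boundary version after subtracting $\phi$ and flattening, maximum principle to control $\sup|u|$, and a finite covering to globalize).
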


\begin{theorem}\label{mainmaxthm}
Let $\Omega$ be a smooth, convex and bounded domain in $\mathbb{R}^{2}$. Let $\phi : \bar{\Omega} \rightarrow \mathbb{R}^{n}$ be a smooth function satisfying, for some $\kappa \in (0,1)$, inequality (\ref{boundaryassumpt}) with $m = 2$ and $\sup_{\Omega}|||D\phi|||^{2} \leq 1 - \kappa$. Then there exists a smooth solution $u$ to the maximal graph Dirichlet problem in $\mathbb{R}^{2+n}_{n}$ with $|||Du|||^{2} < 1 - \kappa$ on $\bar{\Omega}$ and $u = \phi$ on $\partial\Omega$.   
\end{theorem}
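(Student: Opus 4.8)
The plan is to assemble the pieces already in hand and feed them into Lemma \ref{firstlemma}. We work with the fixed $\kappa \in (0,1)$ given in the hypotheses and with $m = 2$. The hypothesis $\sup_\Omega |||D\phi|||^2 \leq 1 - \kappa$ gives the bound on the boundary data required by Lemma \ref{firstlemma}. It remains to verify, for this $\kappa$, the two a priori estimates demanded there: that every $u \in C^{2,\alpha}(\bar\Omega;\mathbb{R}^n)$ giving a maximal graph with $\sup_\Omega |||Du|||^2 \leq 1 - \kappa$ and $u|_{\partial\Omega} = \sigma\phi|_{\partial\Omega}$ for some $\sigma \in [0,1]$ satisfies both $\sup_\Omega |||Du|||^2 < 1 - \kappa$ and $||u||_{1,\alpha} \leq C$ for some constant $C$ independent of $u$ and $\sigma$.

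For the strict gradient bound, first note that since $u \in C^{2,\alpha}(\bar\Omega;\mathbb{R}^n)$ solves the quasilinear elliptic system with $C^{0,\alpha}$ coefficients $g^{ij}(Du)$, the bootstrapping remark following Lemma \ref{firstlemma} (via Theorem 6.19 of \cite{GT}) shows $u$ is in fact smooth on $\bar\Omega$, so the Proposition applies to it. The boundary data here is $\sigma\phi$; since $\sigma \in [0,1]$, we have $\sup_\Omega |||D(\sigma\phi)||| = \sigma \sup_\Omega |||D\phi||| \leq \sup_\Omega |||D\phi|||$ and likewise $\sup_\Omega |||D^2(\sigma\phi)||| \leq \sup_\Omega |||D^2\phi|||$, so $\sigma\phi$ still satisfies inequality (\ref{boundaryassumpt}) with the same $\kappa$ (the left side only decreases, the right side is unchanged). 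Hence the Proposition yields $\sup_\Omega |||Du|||^2 < 1 - \kappa$, as needed.

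For the $C^{1,\alpha}$ estimate we invoke Lemma \ref{2dimest}. Writing $L = g^{ij}(Du)\partial^2/\partial x^i \partial x^j$, the fact that $\sup_\Omega |||Du|||^2 \leq 1 - \kappa$ forces the eigenvalues of $g(Du)$ into $[\kappa, 1]$ and hence those of $g^{-1}(Du)$ into $[1, 1/\kappa]$, so the ellipticity ratio $\Lambda/\lambda$ is bounded by $\eta := 1/\kappa$, a constant depending only on $\kappa$. Applying Lemma \ref{2dimest} component by component to each coordinate $u^\gamma$ (each of which solves $Lu^\gamma = 0$ with boundary data $\sigma\phi^\gamma$), and using $||\sigma\phi^\gamma||_2 \leq ||\phi||_2$, we obtain an exponent $\alpha = \alpha(\eta,\Omega) = \alpha(\kappa,\Omega) \in (0,1)$ and a constant $C = C(\kappa,\Omega,||\phi||_2)$ — crucially independent of $u$ and $\sigma$ — with $||u||_{1,\alpha} \leq C$. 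This fixes the value of $\alpha$ to be used throughout; one checks it is consistent to run Lemma \ref{firstlemma} with this particular $\alpha$, since $\Omega$ is smooth (hence $C^{2,\alpha}$) and $\phi$ is smooth (hence in $C^{2,\alpha}(\bar\Omega;\mathbb{R}^n)$).

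With both a priori estimates verified for this $\kappa$ and $\alpha$, Lemma \ref{firstlemma} produces a solution $u \in C^{2,\alpha}(\bar\Omega;\mathbb{R}^n)$ to the maximal graph Dirichlet problem with $u|_{\partial\Omega} = \phi|_{\partial\Omega}$; the bootstrapping remark upgrades it to smooth on $\bar\Omega$. Applying the strict gradient estimate once more with $\sigma = 1$ gives $|||Du|||^2 < 1 - \kappa$ on $\bar\Omega$, completing the proof. The main obstacle, and the reason the theorem is stated for $m = 2$ rather than general $m$, is precisely the availability of the $C^{1,\alpha}$ estimate: Lemma \ref{2dimest} is a two-dimensional phenomenon (De Giorgi–Nash-type estimates for a single linear equation in two variables with merely bounded measurable coefficients), and there is no analogue for general linear elliptic equations in higher dimensions with only an ellipticity-ratio bound, which is all the gradient estimate of the Proposition supplies. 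Everything else is routine assembly of the earlier results.
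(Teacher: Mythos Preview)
Your proof is correct and follows essentially the same approach as the paper: verify the gradient estimate via the Proposition (noting that $\sigma\phi$ still satisfies (\ref{boundaryassumpt})), obtain the $C^{1,\alpha}$ estimate from Lemma~\ref{2dimest} with $\eta=1/\kappa$, and feed both into Lemma~\ref{firstlemma}. You have simply made explicit several details the paper leaves implicit (bootstrapping to smoothness before invoking the Proposition, applying Lemma~\ref{2dimest} componentwise, and re-applying the gradient estimate at $\sigma=1$ to get the final strict bound).
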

The result claimed in the introduction, when $m=2$, clearly follows from this since the assumptions on $\phi$ are satisfied whenever its $C^{2}$ norm is small enough.
\begin{proof}
If $u$ is a maximal graph with boundary values $\sigma\phi$ (for some $\sigma\in [0,1]$) and $\sup_{\Omega}|||Du|||^{2}\leq 1 - \kappa$, then the gradient estimate from the previous section gives $\sup_{\Omega}|||Du|||^{2} < 1 - \kappa$. The eigenvalues of $g^{ij}$ are between $1$ and $1/\kappa$, so we can take $\eta = 1/\kappa$ in Lemma \ref{2dimest} to get a $C^{1,\alpha}$ estimate, allowing us to apply Lemma \ref{firstlemma} to prove the theorem.
\end{proof}

\section{An Existence Theorem in $\mathbb{R}^{m+n}_{n}$}
\label{sec:AnExistenceTheoremInMathbbRMNN}

Now we prove $C^{1,\alpha}$ estimates, for $m\geq 2$ and $n\geq 1$, which will be used to prove an existence theorem for our problem in $\mathbb{R}^{m+n}_{n}$. We hope to use methods from chapter 13 of \cite{GT}. The $C^{1,\alpha}$ estimates there do not apply directly to systems, so we will need to assume that our gradient estimate is even stronger than before (we will explain why later). We do not expect to get the most general results possible, so we will prove the $C^{1,\alpha}$ estimates as quickly as possible, without wasting time trying to get the best estimates at each step. In this section, $\Omega$ will be a smooth, bounded domain in $\mathbb{R}^{m}$, and $\phi:\bar{\Omega}\rightarrow \mathbb{R}^{n}$ will be smooth with $||\phi||_{2}\leq \Phi_{2}$ and $||\phi||_{3}\leq\Phi_{3}$ for some constants $\Phi_{2},\Phi_{3}>0$. We assume that a smooth function $u:\bar{\Omega}\rightarrow \mathbb{R}^{n}$ is a solution to the corresponding maximal graph Dirichlet problem, with $|||Du|||^{2} \leq 1-\kappa$ on $\bar{\Omega}$ for some constant $\kappa \in (0,1)$. 

For any $\gamma \in \{m+1,\ldots,m+n\}$ and $r \in \{1,\ldots,m\}$, we define $w = \zeta (1-\kappa)^{1/2}\partial u^{\gamma}/\partial x^{r} + v$, where $\zeta$ is some constant (depending on $m$ and $n$) to be chosen later, and $v = \sum_{j, \nu}\left(\partial u^{\nu}/\partial x^{j}\right)^{2}$. Writing $g(p) = I - p^{T}p$ for $p = (p^{\nu}_{k})$,
\begin{eqnarray}
\frac{\partial}{\partial x^{i}}\left(g^{ij}(Du)\frac{\partial w}{\partial x^{j}}\right) 
& = & \frac{\partial g^{ij}}{\partial p_{k}^{\nu}}(Du)\frac{\partial^{2} u^{\nu}}{\partial x^{i}\partial x^{k}}\frac{\partial w}{\partial x^{j}} \nonumber\\
&& + g^{ij}(Du)\left(\zeta\sqrt{1-\kappa}\frac{\partial^{3}u^{\gamma}}{\partial x^{r}\partial x^{i}\partial x^{j}}\right) \nonumber\\
&& + g^{ij}(Du)\left(2\frac{\partial^{2} u^{\nu}}{\partial x^{i}\partial x^{k}}\frac{\partial^{2} u^{\nu}}{\partial x^{j}\partial x^{k}} + 2\frac{\partial^{3} u^{\nu}}{\partial x^{k}\partial x^{i} \partial x^{j}}\frac{\partial u^{\nu}}{\partial x^{k}}\right).\nonumber
\end{eqnarray}
But, since $g^{ij}(Du)\partial^{2}u/\partial x^{i}\partial x^{j} = 0$,  
\begin{eqnarray}
g^{ij}\frac{\partial^{3} u^{\nu}}{\partial x^{h}\partial x^{i}\partial x^{j}} & = & \frac{\partial}{\partial x^{h}}\left( g^{ij}\frac{\partial^{2} u^{\nu}}{\partial x^{i}\partial x^{j}} \right) - \frac{\partial }{\partial x^{h}}\left(g^{ij}\right)\frac{\partial^{2} u^{\nu}}{\partial x^{i}\partial x^{j}}\nonumber\\
& = & - \frac{\partial g^{ij}}{\partial p_{k}^{\delta}}\frac{\partial^{2} u^{\delta}}{\partial x^{k}\partial x^{h}}\frac{\partial^{2} u^{\nu}}{\partial x^{i}\partial x^{j}} \nonumber,
\end{eqnarray}
and therefore
\begin{eqnarray}
\frac{\partial }{\partial x^{i}}\left(g^{ij}\frac{\partial w}{\partial x^{j}} \right) &= & \frac{\partial g^{ij}}{\partial p_{k}^{\nu}}\frac{\partial^{2} u^{\nu}}{\partial x^{i}\partial x^{k}}\frac{\partial w}{\partial x^{j}} 
 - \zeta \sqrt{1-\kappa}\frac{\partial g^{ij}}{\partial p_{k}^{\delta}}\frac{\partial^{2}u^{\delta} }{\partial x^{k}\partial x^{r}}\frac{\partial^{2}u^{\gamma}}{\partial x^{i}\partial x^{j}}\nonumber\\
& & + 2g^{ij}\frac{\partial^{2} u^{\nu} }{\partial x^{i} \partial x^{h}}\frac{\partial^{2}u^{\nu} }{\partial x^{j}\partial x^{h}} - 2\frac{\partial u^{\nu}}{\partial x^{h}}\frac{\partial g^{ij}}{\partial p_{k}^{\delta}}\frac{\partial^{2} u^{\delta}}{\partial x^{k}\partial x^{h}}\frac{\partial^{2} u^{\nu}}{\partial x^{i}\partial x^{j}}.
 \label{stepbeforedivform}
\end{eqnarray}
This is where we need to use the gradient estimate. We want to show that the right hand side is dominated by the third term whenever $1 - \kappa$ is small enough. First we need to remember that the eigenvalues of $g^{ij}$ are between $1$ and $1/\kappa$, so we have 
\begin{eqnarray}
2g^{ij}(Du)\frac{\partial^{2} u^{\nu} }{\partial x^{i} \partial x^{k}}\frac{\partial^{2}u^{\nu} }{\partial x^{j}\partial x^{k}}  \geq  2|D^{2}u|^{2}. \nonumber  
\end{eqnarray}
Using $g(p) = I - p^{T}p$ and differentiating $g^{-1}g=I$ gives $\partial g^{fh}/\partial p_{k}^{\nu} = g^{kf}g^{jh}p_{j}^{\nu} + g^{kh}g^{fj}p_{j}^{\nu}$, which implies that $\left| \left(\partial g^{fh}/\partial p_{k}^{\nu}\right) \right|(p) \leq 2|g^{-1}|^{2}|p| \leq 2m^{2}|p|/\kappa^{2}$. It is also easy to see that $\left|\partial w/\partial x^{i}\right|   \leq (|\zeta| + 2\sqrt{m})(1-\kappa)^{1/2}|D^{2}u|$, where we have used $|Du|^{2} \leq m|||Du|||^{2} \leq m(1-\kappa)$. We can combine all of the inequalities above and, using the Schwarz inequality, apply them to equation (\ref{stepbeforedivform}) to get  
\begin{eqnarray}
\frac{\partial }{\partial x^{i}}\left(g^{ij}(Du)\frac{\partial w}{\partial x^{j}} \right) \geq 2|D^{2}u|^{2}-\frac{1-\kappa}{\kappa^{2}}C|D^{2}u|^{2},\nonumber
\end{eqnarray}
where the constant $C>0$ depends only on $m$ and $n$ (since $\zeta$ does). So, for $1 - \kappa$ small enough (how small depending on $m$ and $n$), we will have 
\begin{equation}
\frac{\partial }{\partial x^{i}}\left(g^{ij}(Du)\frac{\partial w}{\partial x^{j}} \right)  \geq  0.\label{firstsupersoleqn}
\end{equation}

\begin{lemma}\label{arbdimholdergrad}
Let $\Omega$ be a domain in $\mathbb{R}^{m}$. There exist constants $\kappa,\alpha \in(0,1)$ and $K >0$ such that if a maximal graph in $\mathbb{R}^{m+n}_{n}$ is given by a smooth function $u : \Omega \rightarrow \mathbb{R}^{n}$ with $|||Du|||^{2} \leq 1-\kappa$, then $[Du|_{\Omega'}]_{\alpha} \leq K dist(\Omega', \partial \Omega)^{-\alpha}$ on any subdomain $\Omega'$ with closure contained in $\Omega$. Here $\kappa$, $\alpha$ and $K$ depend on $m$ and $n$.
\end{lemma}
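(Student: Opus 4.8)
The plan is to exploit inequality (\ref{firstsupersoleqn}), which says that the function $w = \zeta(1-\kappa)^{1/2}\partial u^{\gamma}/\partial x^{r} + v$ (with $v = \sum_{j,\nu}(\partial u^{\nu}/\partial x^{j})^{2}$) is a weak subsolution of the divergence-form linear elliptic operator $\partial_{i}(g^{ij}(Du)\partial_{j}\,\cdot\,)$, provided $1-\kappa$ is small enough — how small depending only on $m$ and $n$. The coefficient matrix $(g^{ij}(Du))$ is uniformly elliptic on $\Omega$, with eigenvalues pinched between $1$ and $1/\kappa$, so its ellipticity constant depends only on $\kappa$ (hence only on $m,n$ once we have fixed $\kappa$ small). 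The key point is that De Giorgi--Nash--Moser theory for divergence-form equations with merely bounded measurable coefficients gives interior H\"older estimates for subsolutions that depend only on the ellipticity constants, the dimension, and a local $L^{\infty}$ (or $L^{2}$) bound on the solution — in particular, no regularity of the coefficients is needed. This is exactly the situation described at the start of chapter 13 of \cite{GT}; one uses the local maximum principle / weak Harnack inequality (Theorems 8.17--8.18 of \cite{GT}) together with the oscillation-decay argument (Theorem 8.22 of \cite{GT}).

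The steps I would carry out are as follows. First, fix $\kappa$ small enough (depending only on $m,n$) that (\ref{firstsupersoleqn}) holds for every choice of $\gamma\in\{m+1,\ldots,m+n\}$ and $r\in\{1,\ldots,m\}$; also shrink $\kappa$ if necessary so that $1-\kappa<1$, which is automatic. Next, observe that $|w|$ is bounded on all of $\Omega$ by a constant depending only on $m,n$: indeed $|v|\le |Du|^{2}\le m|||Du|||^{2}\le m(1-\kappa)\le m$, and $|\zeta(1-\kappa)^{1/2}\partial u^{\gamma}/\partial x^{r}|\le |\zeta|\sqrt{m}$, so $|w|\le |\zeta|\sqrt{m}+m =: M(m,n)$. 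Now apply the interior De Giorgi--Nash H\"older estimate for subsolutions on a ball: for any $x_{0}\in\Omega'$, with $\rho = \mathrm{dist}(\Omega',\partial\Omega)$, on the ball $B_{\rho}(x_{0})\subset\Omega$ one gets $[w]_{C^{0,\alpha}(B_{\rho/2}(x_{0}))}\le C\rho^{-\alpha}\sup_{B_{\rho}(x_0)}|w|\le C M\rho^{-\alpha}$, where $\alpha\in(0,1)$ and $C$ depend only on the ellipticity constant $1/\kappa$ and on $m$ — hence only on $m,n$. A standard covering/chaining argument upgrades this from half-balls to the full subdomain $\Omega'$ with the same exponent and a constant of the same dependence, absorbing the loss into $K$ and keeping the $\mathrm{dist}(\Omega',\partial\Omega)^{-\alpha}$ scaling.

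Finally I would recover the H\"older bound on $Du$ itself from the bounds on the $w$'s. For fixed $r$, summing or taking suitable combinations of $w$ over $\gamma$ (or simply noting that $v$ is the same for all $\gamma,r$) lets me isolate each $\partial u^{\gamma}/\partial x^{r}$: the difference of two $w$'s with the same $\gamma,r$ is trivial, but comparing $w$ for index $(\gamma,r)$ with the quantity $v$ alone — and $v=\sum_{j,\nu}(\partial u^{\nu}/\partial x^{j})^{2}$ is itself H\"older by the same argument (it is $w$ with $\zeta$ replaced by $0$, and $0$ certainly keeps (\ref{firstsupersoleqn}) as an equality-or-better — actually one just takes $w-v$) — so $\zeta(1-\kappa)^{1/2}\partial u^{\gamma}/\partial x^{r} = w - v$ is H\"older with the claimed quantitative bound, and dividing by the fixed positive constant $\zeta(1-\kappa)^{1/2}$ gives the estimate for $\partial u^{\gamma}/\partial x^{r}$. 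Doing this for every $\gamma$ and $r$ and combining the finitely many estimates yields $[Du|_{\Omega'}]_{\alpha}\le K\,\mathrm{dist}(\Omega',\partial\Omega)^{-\alpha}$ with $K$, $\alpha$, $\kappa$ depending only on $m,n$, as claimed. The main obstacle I anticipate is not any single step but making sure the dependence of every constant is honestly tracked to be only on $m$ and $n$: in particular checking that the threshold for ``$1-\kappa$ small enough'' in (\ref{firstsupersoleqn}) and the De Giorgi--Nash constants $\alpha,C$ really do depend on nothing else, and that the covering argument does not secretly introduce a dependence on $\Omega$ beyond the distance to the boundary.
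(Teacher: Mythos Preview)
Your overall strategy matches the paper's: use inequality (\ref{firstsupersoleqn}) together with the weak Harnack machinery of chapter 8 of \cite{GT}, following Theorem 13.6 there. But the detailed execution has a genuine gap.

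The problematic step is ``apply the interior De Giorgi--Nash H\"older estimate for subsolutions.'' No such estimate exists: De Giorgi--Nash (Theorem 8.22 of \cite{GT}) gives interior H\"older continuity for \emph{solutions}, not for subsolutions alone. Bounded smooth subsolutions of a uniformly elliptic divergence-form operator need not obey any uniform H\"older bound in terms of the ellipticity constants and their $L^\infty$ norm. For instance, on the unit ball in $\mathbb{R}^2$ the functions $v_\epsilon(x)=\log(|x|^2+\epsilon^2)/|2\log\epsilon|$ are smooth, subharmonic, and uniformly bounded between $-1$ and $1$, yet $[v_\epsilon]_{C^{0,\alpha}(B_{1/2})}\to\infty$ as $\epsilon\to 0$ for every $\alpha>0$. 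Consequently your plan to estimate $w$ and $v$ separately in $C^{0,\alpha}$ and then subtract to isolate $\partial u^\gamma/\partial x^r=(w-v)/(\zeta\sqrt{1-\kappa})$ cannot work as written.

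The correct mechanism --- and this is what the paper does --- is to take \emph{both} signs $\zeta=\pm 10mn$, so that $w^{\pm}=\pm 10mn\sqrt{1-\kappa}\,\partial u^\gamma/\partial x^r + v$ are each subsolutions. Then $\sup_{B_{2R}} w^{\pm}-w^{\pm}$ are non-negative supersolutions on any ball $B_{2R}\subset\Omega$, and the weak Harnack inequality (Theorem 8.18 of \cite{GT}) applies to each. When the two resulting integral inequalities are added, the common term $v$ interacts with the $\pm$ first terms in just the right way to produce an oscillation-decay inequality for $\partial u^\gamma/\partial x^r$ directly; summing over all $\gamma,r$ and iterating gives the H\"older bound on $Du$. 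This is precisely the argument of Theorem 13.6 in \cite{GT}, and the size $|\zeta|=10mn$ is what makes the summation over the $mn$ components close up. Your instincts about tracking constant dependencies are fine; the missing idea is this two-sign trick that converts one-sided subsolution information into a two-sided oscillation estimate.
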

\begin{proof}
We are in a position now where we can directly follow the proof of Theorem 13.6 of \cite{GT}, choosing $\nu$ and $r$ as in the proof of this theorem, but taking $\zeta = \pm 10mn$. The idea is to use the fact that $w$ is a subsolution to a linear equation in divergence form, by (\ref{firstsupersoleqn}), to apply the weak Harnack inequality (Theorem 8.18 of \cite{GT}). This gives estimates on $w$ which imply the required $C^{0,\alpha}$ estimate on $Du$.\end{proof}     

This lemma gives interior estimates, but we need a uniform estimate. Therefore we need a $C^{1,\alpha}$ estimate at the boundary of $\Omega$. To get this, we will need to adjust our problem in such a way that we have a solution (to some elliptic system) which is zero on a flat boundary portion. We will need to be very careful about where $\kappa$ appears in our inequalities. Again, we have to make sure that a strong enough gradient estimate (i.e. $\kappa$ being close to $1$) implies that certain terms dominate. Unfortunately, the fact that we have to transform our domain and boundary data means that the gradient bound needed will depend on $\Omega$ and $\phi$. 

Let $B$ be some ball in $\mathbb{R}^{m}$ with centre on $\partial\Omega$. Taking $B$ to be smaller if necessary, we can assume that there is a coordinate change $F:B\rightarrow F(B)\subset\mathbb{R}^{m}$ such that $F$ and $F^{-1}$ are smooth, with $F(B\cap\partial\Omega)\subset \{y \; | \; y^{m}  = 0\}$ and $F(B\cap\Omega)\subset \{y \; | \; y^{m}  > 0\}$, and such that the matrix $DFDF^{T}$ has eigenvalues between two constants $\Lambda_{F}\geq\lambda_{F}>0$. We define a function $\tilde{u}$ by $\tilde{u}(F(x)) = u(x)$. Then $Du = D\tilde{u}DF$ and, if we define      
\begin{equation}
A_{ij}(y,D\tilde{u}(y)) = \delta_{ij} - \left( \frac{\partial \tilde{u}^{\nu}}{\partial y^{k}}(y) \frac{\partial F^{k}}{\partial x^{i}}(F^{-1}(y))\right)\left(\frac{\partial \tilde{u}^{\nu}}{\partial y^{h}}(y)\frac{\partial F^{h}}{\partial x^{j}}(F^{-1}(y)) \right),\nonumber
\end{equation}
then 
\begin{eqnarray}
 0  =  g^{ij}\frac{\partial^{2}u }{\partial x^{i}\partial x^{j}} =  \left(  \frac{\partial F^{k} }{\partial x^{i}}A^{ij}\frac{\partial F^{h} }{\partial x^{j}} \right)\frac{\partial^{2} \tilde{u}}{\partial y^{k}\partial y^{h}} + A^{ij}\frac{\partial^{2}F^{k}}{\partial x^{i}\partial x^{j}}\frac{\partial \tilde{u} }{\partial y^{k}}.\nonumber
\end{eqnarray}
We define $\tilde{\phi}$ by $\phi = \tilde{\phi}(F)$, and take $\hat{u} = \tilde{u} - \tilde{\phi}$ so that $\hat{u}=0$ on $F(\partial\Omega\cap B)$. 
Taking
\begin{eqnarray}
G^{kh}(y,D\hat{u}) &=& A^{ij}(y, D\hat{u} + D\tilde{\phi})\frac{\partial F^{k}}{\partial x^{i}}\frac{\partial F^{h}}{\partial x^{j}},\nonumber\\
B(y,D\hat{u}) &=& A^{ij}(y, D\hat{u} + D\tilde{\phi})\frac{\partial^{2} F^{k}}{\partial x^{i}\partial x^{j}}\left( \frac{\partial \hat{u}}{\partial y^{k}} + \frac{\partial \tilde{\phi}}{\partial y^{k}} \right) + G^{kh}(y,D\hat{u})\frac{\partial^{2}\tilde{\phi}}{\partial y^{h}\partial y^{k}},\nonumber
\end{eqnarray} 
where the matrix $G^{-1} = (G^{kh})$ has eigenvalues between $\lambda_{F}$ and $\Lambda_{F}/\kappa$, then $\hat{u}$ satisfies the elliptic system 
\begin{equation}
0 = G^{kh}(y,D\hat{u}(y))\frac{\partial^{2} \hat{u}}{\partial y^{k}\partial y^{h}} + B(y,D\hat{u}(y)).\label{transsyst}
\end{equation}
Now we define a function $w = \zeta(1-\kappa)^{1/2}\partial \hat{u}^{\gamma}/\partial y^{r} + \sum_{\nu}\sum_{\ell = 1}^{m-1}\left( \partial \hat{u}^{\nu}/\partial y^{\ell}\right)^{2}$ for some $\gamma \in \{m+1,\ldots,m+n\}$ and $r \in \{1,\ldots,m -1\}$. It is important to remember that we will apply the summation convention over the usual ranges for all indices, except for $\ell = 1,\ldots,m-1$. This means that $w$ only involves the tangential derivatives at the flat boundary portion. We get, by using (\ref{transsyst}) and the same reasoning used to get (\ref{stepbeforedivform}), 
\begin{eqnarray}
\frac{\partial}{\partial y^{i}}\left(G^{ij}\frac{\partial w}{\partial y^{j}} \right) 
& = & \frac{\partial}{\partial y^{i}}(G^{ij})\frac{\partial w}{\partial y^{j}} - \zeta\sqrt{1-\kappa}\frac{\partial}{\partial y^{r}}(B^{\gamma}) \nonumber\\
&& - \zeta\sqrt{1-\kappa}\frac{\partial}{\partial y^{r}}(G^{ij})\frac{\partial^{2}\hat{u}^{\gamma}}{\partial y^{i}\partial y^{j}} + 2G^{ij}\frac{\partial^{2}\hat{u}^{\nu}}{\partial y^{i}\partial y^{\ell}}\frac{\partial^{2}\hat{u}^{\nu}}{\partial y^{j}\partial y^{\ell}} \nonumber\\
&& - 2\frac{\partial \hat{u}^{\nu}}{\partial y^{\ell}}\frac{\partial}{\partial y^{\ell}}(B^{\nu}) - 2\frac{\partial \hat{u}^{\nu}}{\partial y^{\ell}}\frac{\partial}{\partial y^{\ell}}(G^{ij})\frac{\partial^{2}\hat{u}^{\nu}}{\partial y^{i}\partial y^{j}}.\label{uglytransformeddivform}
\end{eqnarray}    
Note that we will also stop labelling constants here and will, for now, just denote by $C$ any positive constant depending only on $m$, $n$, $F$ and $\Phi_{2}$ (but not $\kappa$). By the same reasoning as before, we easily see that 
\begin{eqnarray}
&&\left|\left(\frac{\partial A^{ij}}{\partial y^{k}}\right) \right|  \leq  |A^{-1}|^{2}\left|\left(\frac{\partial A_{ij}}{\partial y^{k}} \right)\right|  \leq \frac{C}{\kappa^{2}}\left|\left(\frac{\partial F^{h}}{\partial x^{i}} p_{h}^{\nu}p_{f}^{\nu}\frac{\partial }{\partial y^{k}}\frac{\partial F^{f}}{\partial x^{j}} \right)\right|\leq \frac{C}{\kappa^{2}}|p|^{2},\nonumber\\
&&\left|\left(\frac{\partial A^{ij}}{\partial p^{\nu}_{k}}\right) \right| \leq |A^{-1}|^{2}\left|\left(\frac{\partial A_{ij}}{\partial p^{\nu}_{k}} \right)\right| \leq \frac{C}{\kappa^{2}}\left|\left(\frac{\partial F^{h}}{\partial x^{i}}\delta_{\nu\eta}\delta_{h k}p^{\nu}_{f}\frac{\partial F^{f}}{\partial x^{j}}\right)\right| \leq \frac{C}{\kappa^{2}}|p|.\nonumber
\end{eqnarray}
We can use these inequalities and $|D\tilde{u}| \leq |Du|\cdot|DF| \leq C\sqrt{1-\kappa} \leq C$ (along with the Schwarz, Young and triangle inequalities, and $0<\kappa<1$) to get
\begin{eqnarray}
\left|\frac{\partial }{\partial y^{k}}\left( G^{ij}(y, D\hat{u}(y)) \right)\right| +
\left|\frac{\partial }{\partial y^{k}} \left(B^{\nu}(y, D\hat{u}(y))\right)  \right|  \leq   \frac{C}{\kappa^{2}}(|D^{2}\hat{u}|+1) +  \frac{C}{\kappa^{2}}|D^{3}\tilde{\phi}|.\nonumber
\end{eqnarray}
We also have $|Dw| \leq C |D^{2}\hat{u}|\sqrt{1-\kappa}$. We apply all of these inequalities to equation (\ref{uglytransformeddivform}) to get  
\begin{eqnarray}
\frac{\partial}{\partial y^{i}}\left(G^{ij}\frac{\partial w}{\partial y^{j}} \right)& \geq & 2G^{ij}\frac{\partial^{2}\hat{u}^{\nu}}{\partial y^{i}\partial y^{\ell}}\frac{\partial^{2}\hat{u}^{\nu}}{\partial y^{j}\partial y^{\ell}} - \frac{C\sqrt{1-\kappa}}{\kappa^{2}}\left(|D^{2}\hat{u}|^{2} + 1 + |D^{3}\tilde{\phi}|\right)\nonumber\\
& \geq &  2\lambda_{F}\sum_{\nu,i,\ell}\left(\frac{\partial^{2}\hat{u}^{\nu}}{\partial y^{i}\partial y^{\ell}}\right)^{2}   - \frac{C\sqrt{1-\kappa}}{\kappa^{2}}\left(|D^{2}\hat{u}|^{2} + 1 +\Phi_{3}\right),\label{dominantermineq}  
\end{eqnarray}    
and we hope that the first term will dominate when $\kappa$ is close enough to $1$. Obviously this term contains all second order derivatives of $\hat{u}$ except $\partial^{2}\hat{u}/\partial x^{m}\partial x^{m}$. By using system (\ref{transsyst}), and the obvious bounds on $|B|$ and $|G^{-1}|$, we can estimate this remaining second order derivative, 
\begin{eqnarray}
\left|\frac{\partial^{2}\hat{u}^{\nu}}{\partial y^{m}\partial y^{m}}\right|^{2} = \left|\sum_{(i,j)\neq(m,m)}\frac{G^{ij}}{G^{mm}}\frac{\partial^{2}\hat{u}^{\nu}}{\partial y^{i}\partial y^{j}} - \frac{B^{\nu}}{G^{mm}}\right|^{2}  \leq  \frac{C}{\kappa^{2}}\left(\sum_{i,\ell}\left(\frac{\partial^{2}\hat{u}^{\nu}}{\partial y^{i}\partial y^{\ell}}\right)^{2}  + 1\right),\nonumber
\end{eqnarray}
where we have used Young's inequality and $\kappa<1$. This implies that
\begin{eqnarray}
|D^{2}\hat{u}|^{2}  \leq  \frac{C}{\kappa^{2}}\left(\sum_{\nu,i,\ell}\left(\frac{\partial^{2}\hat{u}^{\nu}}{\partial y^{i}\partial y^{\ell}}\right)^{2} + 1 \right).\label{boundonfulbytang}
\end{eqnarray}
The right hand side again contains all derivatives except $\partial^{2}\hat{u}/\partial x^{m}\partial x^{m}$. Combining this with inequality (\ref{dominantermineq}) gives
\begin{eqnarray}
\frac{\partial}{\partial y^{i}}\left(G^{ij}\frac{\partial w}{\partial y^{j}} \right) \geq C\left(\kappa^{2}|D^{2}\hat{u}|^{2} -1\right) - \frac{C\sqrt{1-\kappa}}{\kappa^{2}}\left(|D^{2}\hat{u}|^{2} + 1 + \Phi_{3}\right),\nonumber
\end{eqnarray}    
which implies that if we choose $\kappa$ close enough to $1$ then the terms involving $|D^{2}\hat{u}|^{2}$ will cancel, and then $w$ will be a subsolution to some linear elliptic equation in divergence form. It is important to note that our choice of $\kappa$ is determined only by $m$, $n$, $\Omega$ (through dependence on $F$) and $\Phi_{2}$. 

\begin{lemma}\label{unifholderonderiv}
Let $\Omega$ be a smooth, bounded domain in $\mathbb{R}^{m}$, and let $\phi:\bar{\Omega} \rightarrow \mathbb{R}^{n}$ be a smooth function with $||\phi||_{2}\leq \Phi_{2}$ and $||\phi||_{3}\leq \Phi_{3}$ for some constants $\Phi_{2},\Phi_{3}>0$. There exist constants $\kappa,\alpha\in(0,1)$ and $K>0$ such that if a maximal graph in $\mathbb{R}^{m+n}_{n}$ is given by a smooth function $u : \bar{\Omega} \rightarrow \mathbb{R}^{n}$, with $u|_{\partial\Omega}=\phi|_{\partial\Omega}$ and $|||Du|||^{2} \leq 1-\kappa$, then $[Du]_{\alpha} \leq K$. Here $\kappa$ depends on $m$, $n$, $\Phi_{2}$ and $\Omega$, while $\alpha$ and $K$ depend on $m$, $n$, $\Omega$ and $\Phi_{3}$. 
\end{lemma}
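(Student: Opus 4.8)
The plan is to combine the interior H\"older estimate of Lemma \ref{arbdimholdergrad} with a boundary H\"older estimate, and then patch them together using a standard covering argument. The boundary estimate comes from exactly the machinery built up in the paragraphs preceding the lemma statement: for a ball $B$ centred on $\partial\Omega$, we flatten the boundary via $F$, pass to $\hat u = \tilde u - \tilde\phi$ which vanishes on the flat portion $F(\partial\Omega\cap B)$ and solves the transformed system (\ref{transsyst}), and then consider the tangential function $w = \zeta(1-\kappa)^{1/2}\partial\hat u^\gamma/\partial y^r + \sum_\nu\sum_{\ell=1}^{m-1}(\partial\hat u^\nu/\partial y^\ell)^2$. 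The chain of inequalities (\ref{uglytransformeddivform})--(\ref{boundonfulbytang}) shows that, once $\kappa$ is chosen close enough to $1$ (depending only on $m$, $n$, $\Omega$ through $F$, and $\Phi_2$), the quadratic-in-$D^2\hat u$ terms cancel and $w$ becomes a subsolution of a linear elliptic equation in divergence form whose coefficients are controlled by $m$, $n$, $F$, $\Phi_2$, $\Phi_3$.

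The next step is to apply the boundary weak Harnack inequality to this subsolution exactly as in the proof of Theorem 13.7 of \cite{GT}. Because $w$ is built from tangential derivatives of $\hat u$, which vanish on the flat boundary piece (since $\hat u\equiv 0$ there), one gets — for each choice of $\gamma$ and $r\in\{1,\dots,m-1\}$, and with the two signs $\zeta=\pm 10mn$ — an oscillation decay estimate for the tangential derivatives $\partial\hat u^\nu/\partial y^\ell$ near the boundary, hence a $C^{0,\alpha}$ bound on all tangential components of $D\hat u$ in a smaller half-ball. The remaining normal derivative $\partial\hat u^\nu/\partial y^m$ is then recovered from the equation: inequality (\ref{boundonfulbytang}) already shows $D^2\hat u$ is pointwise controlled by the tangential second derivatives plus a constant, and the same idea at first order (differentiating (\ref{transsyst}) tangentially, or just using that $\partial^2\hat u/\partial y^m\partial y^m$ is an algebraic expression in the other second derivatives and lower-order data) lets one transfer the H\"older modulus from the tangential part of $D\hat u$ to all of $D\hat u$. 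Undoing the coordinate change $F$ (which is bi-Lipschitz with smooth inverse, so preserves H\"older classes with constants depending on $F$) gives a $C^{0,\alpha}$ estimate on $Du$ in $B'\cap\bar\Omega$ for a shrunken ball $B'$.

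Finally I would assemble the global estimate. Since $\partial\Omega$ is compact, cover it by finitely many such shrunken half-balls $B'_1,\dots,B'_N$; on each we have $[Du]_{\alpha;\,B'_k\cap\bar\Omega}\le K_1$, and the sup-norm of $Du$ is already bounded by $\sqrt{m(1-\kappa)}$ from the spacelike condition. The set $\Omega'=\Omega\setminus\bigcup_k B'_k$ has positive distance from $\partial\Omega$, so Lemma \ref{arbdimholdergrad} gives $[Du|_{\Omega'}]_{\alpha}\le K_2$ (after possibly shrinking $\alpha$ to the minimum of the interior and boundary exponents, and making $\kappa$ the larger of the two required values). A routine interpolation/patching lemma for H\"older seminorms — bounding $|Du(x)-Du(y)|/|x-y|^\alpha$ for $x,y$ in different pieces by splitting the segment and using the sup-bound on $Du$ together with the local seminorm bounds on overlapping pieces — then yields the uniform bound $[Du]_\alpha\le K$ on all of $\bar\Omega$, with the stated dependences of $\kappa$, $\alpha$, $K$. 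The main obstacle I expect is the boundary argument: one must verify carefully that the coefficients of the divergence-form equation satisfied by $w$ (and the lower-order term absorbed after the cancellation) have the regularity and the $\kappa$-independent bounds required for the De Giorgi--Nash--Moser boundary weak Harnack inequality, and that the finite overlap of boundary balls does not cause the constants to blow up; the transfer from tangential to full $D\hat u$ via the equation is elementary by comparison.
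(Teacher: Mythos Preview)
Your proposal is correct and follows essentially the same route as the paper's own proof: flatten the boundary, use that $w$ is a subsolution so that the boundary weak Harnack inequality (Theorem 8.26 of \cite{GT}, applied as in the proof of Theorem 13.7 there) yields H\"older control on the tangential derivatives $\partial\hat u/\partial y^{\ell}$, then recover $\partial\hat u/\partial y^{m}$ via inequality (\ref{boundonfulbytang}), and finally combine the resulting boundary estimate with the interior estimate of Lemma \ref{arbdimholdergrad}. The paper compresses the covering/patching step into a single sentence, whereas you spell it out, but the substance is identical.
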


\begin{proof}
Since $w$ is a subsolution, we have enough to directly follow the proof of Theorem 13.7 in \cite{GT}. Since $F(B\cap\partial\Omega)$ is flat, and since $\hat{u}$ is zero on this boundary portion, we know that the tangential derivatives $\partial \hat{u}/\partial y^{\ell}$ will be zero there, and therefore so will $w$. Using this fact, we apply the boundary weak Harnack inequality (Theorem 8.26 of \cite{GT}) exactly as in section 13.4 of \cite{GT}, getting H\"older estimates on each $\partial \hat{u}/\partial y^{\ell}$. The estimate on $\partial \hat{u}/\partial y^{m}$ comes from the estimates on $\partial \hat{u}/\partial y^{\ell}$ by using inequality (\ref{boundonfulbytang}). This gives us H\"older estimates on $u$ near the boundary of $\Omega$, which combine with the interior estimates from Lemma \ref{arbdimholdergrad} to complete the proof.    
\end{proof}      

\begin{theorem}\label{arbdimcodimexistthm}
Given a convex, smooth, bounded domain $\Omega$ in $\mathbb{R}^{m}$, there will exist a constant $C$ (depending on $\Omega, m, n$) such that the maximal graph Dirichlet problem in $\mathbb{R}^{m+n}_{n}$ will have a smooth solution, with $u|_{\partial\Omega} = \phi|_{\partial\Omega}$, for any smooth $\phi:\bar{\Omega}\rightarrow\mathbb{R}^{n}$ with $C^{2}$ norm less than $C$.\footnote{Also, if we know various properties of the domain (curvature of $\partial\Omega$, etc.) then it would be possible (but difficult and tedious) find a lower bound on the required $C$, by following the proofs of the $C^{1,\alpha}$ estimates more carefully to check the constants involved at each step.}   
\end{theorem}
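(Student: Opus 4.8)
The plan is to assemble Theorem \ref{arbdimcodimexistthm} by feeding the a priori estimates from the previous sections into Lemma \ref{firstlemma}, exactly as Theorem \ref{mainmaxthm} was proved but now using Lemma \ref{unifholderonderiv} in place of Lemma \ref{2dimest}. First I would fix the convex smooth bounded domain $\Omega$ and set $\kappa$ to be any value in $(0,1)$ close enough to $1$ that \emph{both} of the following hold: the interior estimate of Lemma \ref{arbdimholdergrad} applies (this needs $1-\kappa$ small in terms of $m,n$), and the boundary estimate of Lemma \ref{unifholderonderiv} applies with $\Phi_2 := 1$ (this needs $\kappa$ close to $1$ in terms of $m,n,\Omega,\Phi_2$). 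Having frozen $\kappa$, I would then choose $C>0$ small enough, depending only on $\Omega,m,n$, so that any smooth $\phi$ with $\|\phi\|_2 < C$ satisfies the three conditions needed downstream: (i) $\sup_\Omega |||D\phi|||^2 \le 1-\kappa$; (ii) $\sup_\Omega |||D^2\phi||| $ and $\sup_{\partial\Omega}|||D\phi|||$ are small enough that the gradient inequality (\ref{boundaryassumpt}) holds for this $\kappa$ and $m$; and (iii) $\|\phi\|_2 \le \Phi_2 = 1$. Note that smoothness of $\phi$ guarantees $\|\phi\|_3 \le \Phi_3$ for \emph{some} finite $\Phi_3$, which is all Lemma \ref{unifholderonderiv} needs — $\Phi_3$ only enters the constants $\alpha, K$, not the admissible range of $\kappa$, so it is harmless that we cannot bound it in terms of $C$ alone.

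With these choices made, the argument runs as follows. Let $\phi$ be smooth with $\|\phi\|_2 < C$; pick $\Phi_3$ with $\|\phi\|_3 \le \Phi_3$. Suppose $u \in C^{2,\alpha}(\bar\Omega;\mathbb{R}^n)$ gives a maximal graph with $\sup_\Omega |||Du|||^2 \le 1-\kappa$ and $u|_{\partial\Omega} = \sigma\phi|_{\partial\Omega}$ for some $\sigma \in [0,1]$. By elliptic regularity (Theorem 6.19 of \cite{GT}, as discussed after Lemma \ref{firstlemma}) such a $u$ is in fact smooth on $\bar\Omega$, so the results of sections \ref{sec:GradientEstimate} and \ref{sec:AnExistenceTheoremInMathbbRMNN} apply to it. Since $\sigma\phi$ still satisfies (\ref{boundaryassumpt}) (the left-hand side only decreases when $\phi$ is scaled by $\sigma \in [0,1]$) and $\sup_\Omega|||D(\sigma\phi)|||^2 \le 1-\kappa$, the Proposition gives $\sup_\Omega |||Du|||^2 < 1-\kappa$. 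Moreover $\|\sigma\phi\|_2 \le \|\phi\|_2 \le 1 = \Phi_2$ and $\|\sigma\phi\|_3 \le \Phi_3$, so Lemma \ref{unifholderonderiv} yields $[Du]_\alpha \le K$ with $\alpha,K$ independent of $\sigma$; combined with the $C^0$ bound on $|u|$ in terms of $\sup_\Omega|\phi|$ (from the maximum principle, as noted after Lemma \ref{firstlemma}) and the gradient bound $|Du| \le \sqrt m\,|||Du||| \le \sqrt{m(1-\kappa)}$, this gives a uniform bound $\|u\|_{1,\alpha} \le C'$ for a constant $C'$ independent of $\sigma$. The hypotheses of Lemma \ref{firstlemma} are thus verified (with $\Omega$ which is smooth hence $C^{2,\alpha}$, with this $\kappa$, and with $C'$), so there exists a solution $u \in C^{2,\alpha}(\bar\Omega;\mathbb{R}^n)$ to the maximal graph Dirichlet problem with $u|_{\partial\Omega} = \phi|_{\partial\Omega}$; by the regularity remark it is smooth on $\bar\Omega$.

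The one genuinely delicate point — and the place I would be most careful — is the order of quantifiers governing $\kappa$ and $C$. The admissible window for $\kappa$ in Lemma \ref{unifholderonderiv} depends on $\Phi_2$, so one must commit to the normalisation $\Phi_2 = 1$ (or any fixed constant) \emph{before} choosing $\kappa$, and only afterwards shrink $C$ so that $\|\phi\|_2 < C \le 1$ forces $\phi$ into that normalisation; circular dependence here would break the proof. A secondary subtlety is that $\Phi_3$ cannot be controlled by $C$, so one must double-check that it never feeds back into the choice of $\kappa$ or into the structural inequalities (\ref{firstsupersoleqn}), (\ref{dominantermineq}) — it does not, since in section \ref{sec:AnExistenceTheoremInMathbbRMNN} the $|D^3\tilde\phi| \le \Phi_3$ terms are lower-order and absorbed, affecting only $\alpha$ and $K$. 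Everything else is bookkeeping: tracking that scaling $\phi$ by $\sigma$ preserves all the smallness hypotheses, and that all estimates produced are uniform over $\sigma \in [0,1]$ as Lemma \ref{firstlemma} requires.
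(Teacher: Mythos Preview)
Your proposal is correct and follows essentially the same route as the paper's proof: fix $\kappa$ via Lemma \ref{unifholderonderiv}, shrink the $C^{2}$ threshold so that both $|||D\phi|||^{2}\leq 1-\kappa$ and inequality (\ref{boundaryassumpt}) hold, then feed the resulting gradient and $C^{1,\alpha}$ estimates (uniform in $\sigma\in[0,1]$) into Lemma \ref{firstlemma}. Your version is simply more explicit about the quantifier discipline --- in particular the normalisation $\Phi_{2}=1$ to pin down $\kappa$ independently of $\phi$, and the observation that $\Phi_{3}$ influences only $\alpha,K$ and not the admissibility of $\kappa$ --- points the paper's terse proof leaves implicit.
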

\begin{proof}
Let $||\phi||_{2} \leq \Phi_{2}$ and $||\phi||_{3}\leq\Phi_{3}$. Let $\kappa=\kappa(m,n,\Omega,\Phi_{2})$ be as in Lemma \ref{unifholderonderiv}. Assume further that $||\phi||_{2}$ is small enough that $|||D\phi||^{2}\leq 1-\kappa$ and that inequality (\ref{boundaryassumpt}) holds for this $\kappa$. This gives the gradient estimate needed to apply Lemma \ref{unifholderonderiv}, giving a $C^{1,\alpha}$ estimate. These clearly hold for solutions with boundary values $\sigma\phi$, for any $\sigma \in [0,1]$, allowing us to apply Lemma \ref{firstlemma}.  
\end{proof}

\section{The Minimal Graph Dirichlet Problem}
\label{TheMinimalGraphProblem}

Our main theorems say that a solution to the maximal graph Dirichlet problem will exist for boundary data with $C^{2}$ norm less than some constant. It seems obvious to attempt to improve these results in such a way that the constant will depend on the domain in a simpler way. Such a result (but for minimal graphs) is the goal of \cite{MTW2}, where the proof uses a gradient estimate and White's regularity theorem (see \cite{BW}) for mean curvature flows. For spacelike mean curvature flows in semi-Euclidean spaces, we have a similar gradient estimate (see Proposition \ref{mcfgrad}) and a version of White's theorem (see \cite{BST}), 
 so it makes sense to attempt a similar proof. In \cite{MTW2}, White's theorem is used to get $C^{2,\alpha}$ estimates to prove long time existence. However, White's theorem only gives \emph{local} estimates, which are not enough to prove long time existence in this problem, leaving a gap in the proof. It is not clear if the proof in \cite{MTW2} can be fixed, but we can get a similar (but weaker) result by repeating the proofs of the $C^{1,\alpha}$ estimates used in Theorem \ref{arbdimcodimexistthm} (now in the minimal graph case) and using the gradient estimate from \cite{MTW2}.   

\begin{theorem}
Let $\Omega$ be a bounded, smooth and convex domain in $\mathbb{R}^{m}$. There is a positive constant $C$ (depending on $\Omega, m, n$) such that there exists a smooth solution to the minimal graph Dirichlet problem in $\mathbb{R}^{m+n}$, with boundary values $\phi|_{\partial\Omega}$, whenever $\phi: \bar{\Omega} \rightarrow \mathbb{R}^{n}$ is smooth with $C^{2}$ norm less than $C$.
\end{theorem}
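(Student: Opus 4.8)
The plan is to imitate the proof of Theorem~\ref{arbdimcodimexistthm} step by step, with the spacelike mean curvature system replaced by the minimal graph system (for which $g_{ij}=\delta_{ij}+(\partial u^{\gamma}/\partial x^{i})(\partial u^{\gamma}/\partial x^{j})$, so $g^{ij}(Du)$ has eigenvalues in $[(1+|Du|^{2})^{-1},1]$). Three ingredients are needed, each a minimal-graph analogue of something above: a Leray--Schauder existence lemma reducing the problem to an a priori $C^{1,\alpha}$ bound; a gradient estimate; and interior and boundary $C^{1,\alpha}$ estimates for solutions carrying a gradient bound.

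For the first ingredient, the minimal graph system is uniformly elliptic as soon as $|Du|$ is bounded, and the coefficients depend smoothly on $Du$, so there is no need for the extra care with the set $R$ required in Lemma~\ref{firstlemma}: the argument there (equivalently, Theorem 11.4 of \cite{GT}) applies directly once the a priori estimates are in hand, and the resulting $C^{2,\alpha}$ solution is smooth by the usual bootstrap with Theorem 6.19 of \cite{GT}. For the second ingredient I would invoke the gradient estimate of \cite{MTW2}: since $\Omega$ is bounded and convex, if $\|\phi\|_{2}$ is smaller than a constant depending only on $\mathrm{diam}\,\Omega$ and $m$, then every minimal graph with boundary values $\phi$ (or $\sigma\phi$ for $\sigma\in[0,1]$, since the domain is unchanged and the $C^{2}$ norm only decreases) has $\sup_{\Omega}|Du|$ bounded, and in fact $\sup_{\Omega}|Du|$ can be made as small as we wish by shrinking $\|\phi\|_{2}$. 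This last point is what makes the rest of the argument go through, and it is why the relevant smallness here is of $\sup|Du|$ rather than of $1-\kappa$ as in Section~\ref{sec:AnExistenceTheoremInMathbbRMNN}.

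For the third ingredient I would repeat, now with the minimal-surface coefficients, the computations producing (\ref{firstsupersoleqn}), (\ref{dominantermineq}) and (\ref{boundonfulbytang}). Differentiating the system in a tangential direction $\partial/\partial x^{r}$ and forming the auxiliary function $w$ built from $\partial u^{\gamma}/\partial x^{r}$ and $|Du|^{2}$ as there, one obtains in divergence form $\partial_{i}(g^{ij}\partial_{j}w)\ge 2g^{ij}(\partial^{2}u^{\nu}/\partial x^{i}\partial x^{\ell})(\partial^{2}u^{\nu}/\partial x^{j}\partial x^{\ell})-C\sup|Du|\,(|D^{2}u|^{2}+\cdots)$, since every error term carries at least one factor of $Du$ (from $\partial g^{ij}/\partial p$ and from $Du$ itself); hence $w$ is a subsolution of a linear divergence-form equation once $\sup|Du|$, i.e.\ $\|\phi\|_{2}$, is small enough. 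The interior estimate then follows from the weak Harnack inequality (Theorem 8.18 of \cite{GT}) exactly as in Theorem 13.6 of \cite{GT}, giving the analogue of Lemma~\ref{arbdimholdergrad}. For the boundary, one flattens a boundary ball by a coordinate change $F$, transforms the system as in (\ref{transsyst}), and uses only tangential derivatives in $w$; since $\hat u$ vanishes on the flat portion, so do its tangential derivatives and so does $w$ there, and the boundary weak Harnack inequality (Theorem 8.26 of \cite{GT}), applied as in Theorem 13.7 of \cite{GT}, gives H\"older estimates on the tangential derivatives of $u$. The normal--normal second derivative is recovered from the equation as in (\ref{boundonfulbytang}), and patching with the interior estimate yields a uniform $[Du]_{\alpha}$ bound, the analogue of Lemma~\ref{unifholderonderiv}. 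Feeding this into the existence lemma proves the theorem, with $C$ depending on $\Omega,m,n$ through the gradient estimate and the flattening map $F$.

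The main obstacle is precisely the boundary $C^{1,\alpha}$ estimate --- the point whose analogue was overlooked in \cite{MTW2}. Two things require care. First, the coordinate change $F$ used to flatten $\partial\Omega$ feeds constants depending on $\Omega$ (and on $\|\phi\|_{2},\|\phi\|_{3}$) into the error terms, so the smallness of $\|\phi\|_{2}$ needed to make $w$ a subsolution, hence the final constant $C$, inherits a dependence on $\Omega$; this is why the result is weaker than the one aimed at in \cite{MTW2}, where $C$ was to depend only on $\mathrm{diam}\,\Omega$ and the dimensions, and it is why we do not try to follow the mean-curvature-flow route there. Second, one must check that the transformed system retains uniform ellipticity of the $G^{kh}$ together with the bound on the lower-order term $B$, so that the divergence-form manipulation behind the subsolution property is legitimate; this is routine but is where a misplaced factor would do the most damage. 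Once the boundary estimate is in place, combining it with the interior estimate and the existence lemma is immediate.
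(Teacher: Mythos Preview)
Your proposal is correct and follows essentially the same approach the paper indicates: repeat the $C^{1,\alpha}$ estimates from Section~\ref{sec:AnExistenceTheoremInMathbbRMNN} in the minimal-graph setting (where smallness of $\sup|Du|$ replaces closeness of $\kappa$ to $1$), combine them with the gradient estimate from \cite{MTW2}, and close via the Leray--Schauder scheme. You have also correctly identified why the resulting constant depends on $\Omega$ rather than merely on $\mathrm{diam}\,\Omega$, which is exactly the sense in which this theorem is weaker than the result aimed at in \cite{MTW2}.
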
      

Acknowledgements: The author is grateful to Professor Mu-Tao Wang for his help and, in particular, for the suggestion of how to obtain a priori H\"older estimates on the gradient for the case of dimension greater than $2$. The results proved here formed part of the author's PhD thesis at Durham University, under the supervision of Dr Wilhelm Klingenberg.  

\appendix
\section{Gradient Estimate for Mean Curvature Flow}
\label{sec:GradientEstimateForSpacelikeMeanCurvatureFlow}

Our goal in this section will be to prove a gradient estimate for spacelike mean curvature flows satisfying certain boundary/initial conditions. Suppose that we have a graphic mean curvature flow in $\mathbb{R}^{m+n}_{n}$, given by some function $u:\Omega\times (0,T)\rightarrow \mathbb{R}^{n}$ for a bounded, convex, $C^{2}$ domain $\Omega \subset \mathbb{R}^{m}$. We assume $u$ is smooth on the interior of its domain and $C^{1}$ on the closure. We take the induced metric $(g_{ij})$ from $\mathbb{R}^{m+n}_{n}$ on spatial slices $M_{t} = \{(x,u(x,t))\in \mathbb{R}^{m+n}_{n} \; | \; x\in\Omega\}$ for each $t\in[0,T]$, and we assume that these are spacelike (i.e. that $|||Du||| < 1$, where $D$ taken with respect to the space variables in $\mathbb{R}^{m}$ only). By the mean curvature flow condition, $u$ satisfies the parabolic system $\partial u / \partial t = g^{ij}(Du)\partial^{2}u/\partial x^{i} \partial x^{j}$.

\begin{proposition}\label{mcfgrad}Let $\phi : \bar{\Omega}\times[0,T] \rightarrow \mathbb{R}^{n}$ be a  $C^{2}$ function and let $\kappa\in(0,1)$. If the function $u$ above satisfies the boundary/initial condition that $u(x,t) = \phi(x,t)$ whenever $x\in\partial\Omega$ or $t = 0$, then the inequality $\sup_{\Omega}|||Du|||^{2} < 1 - \kappa$ will hold for all times in $[0,T]$ if the (parabolic) $C^{2}$ norm of $\phi$ is small enough. 

\end{proposition}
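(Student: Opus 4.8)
The plan is to run the parabolic analogue of the argument of Section~\ref{sec:GradientEstimate}, following \cite{MTW2}. Write $\mathcal{L}=g^{ij}(Du)\,\partial^{2}/\partial x^{i}\partial x^{j}-\partial/\partial t$, so that the mean curvature flow equation reads $\mathcal{L}u=0$. Since $\bar\Omega\times[0,T]$ is compact and $|||Du|||<1$ there, the spacelike bound is automatically uniform, but the constant it provides may be weaker than the target $1-\kappa$; to deal with this I would argue by continuity in time. At $t=0$ we have $u=\phi$, hence $\sup_{\Omega}|||Du|||^{2}=\sup_{\Omega}|||D\phi|||^{2}$, which will be strictly below $1-\kappa^{1/m}$ once the parabolic $C^{2}$ norm of $\phi$ is small. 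Let $\tau\in(0,T]$ be the supremum of the times $t$ such that $\sup_{\Omega}|||Du(\cdot,s)|||^{2}\le 1-\kappa$ for all $s\in[0,t]$; it suffices to show that on the interval $[0,\tau]$ one actually has the strict inequality $\sup_{\Omega}|||Du|||^{2}<1-\kappa$, since then continuity (the set where the non-strict bound holds is closed) forces $\tau=T$. On $[0,\tau]$ the eigenvalues of $g^{-1}$ lie in $[1,1/\kappa]$, so $\mathcal{L}$ is uniformly parabolic there and all of the following is legitimate.

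\emph{Boundary estimate.} Fix $\gamma\in\{m+1,\dots,m+n\}$ and $p\in\partial\Omega$, and set $S^{\pm}=\nu\log(1+\zeta d)\mp(u^{\gamma}-\phi^{\gamma})$, where $d(x)$ is the distance from $x$ to the hyperplane tangent to $\partial\Omega$ at $p$; by convexity $d\ge 0$ on $\bar\Omega$, and $d$ is affine in $x$ and independent of $t$, so $\mathcal{L}d=0$ and $|Dd|=1$. Exactly as for (\ref{constraint}), using $\mathcal{L}u=0$ and the eigenvalue bounds, one gets $\mathcal{L}S^{\pm}\le -\nu\zeta^{2}(1+\zeta\,\textrm{diam}\,\Omega)^{-2}+(m/\kappa)\sup_{\Omega}|||D^{2}\phi|||+\sup|\partial\phi/\partial t|$, the only new term relative to the elliptic case being the $\partial\phi^{\gamma}/\partial t$ produced by $-\partial_{t}$ in $\mathcal{L}$. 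Choosing $\zeta=1/\textrm{diam}\,\Omega$ and $\nu\zeta$ just large enough to make this right-hand side non-positive — possible when the parabolic $C^{2}$ norm of $\phi$ is small, and giving $\nu\zeta$ bounded by a fixed multiple of that norm over $\kappa$ — the parabolic maximum principle places the minimum of $S^{\pm}$ on the parabolic boundary $(\partial\Omega\times[0,\tau])\cup(\Omega\times\{0\})$, where $u^{\gamma}=\phi^{\gamma}$ and hence $S^{\pm}\ge 0$. Thus $\nu\log(1+\zeta d)\ge|u^{\gamma}-\phi^{\gamma}|$ throughout, and the steps of \cite{MTW2} convert this into $|||Du|||\le\nu\zeta+2|||D\phi|||$ at $p$ at every time, so $\sup_{\partial\Omega\times[0,\tau]}|||Du|||$ is bounded by a constant multiple of $(\textrm{parabolic }C^{2}\textrm{ norm of }\phi)/\kappa$.

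\emph{Propagation to the interior.} Next I would invoke the parabolic analogue of inequality~4.6 of \cite{LS}, namely that along a spacelike mean curvature flow $\log\sqrt{\det g}$ is a supersolution of the heat equation on $M_{t}$; this follows from the same computation as in \cite{LS} with the term $\partial_{t}\log\sqrt{\det g}$ included (cf. \cite{BST,MTW2}). The parabolic maximum principle then gives $\sqrt{\det g}\ge$ its minimum over the parabolic boundary, i.e. over $(\partial\Omega\times[0,\tau])\cup(\Omega\times\{0\})$. On $\Omega\times\{0\}$ we have $\sqrt{\det g}\ge(1-\sup_{\Omega}|||D\phi|||^{2})^{m/2}$, and on $\partial\Omega\times[0,\tau]$ the boundary estimate controls $|||Du|||$; so if the parabolic $C^{2}$ norm of $\phi$ is small enough that $|||Du|||^{2}<1-\kappa^{1/m}$ on the whole parabolic boundary (this also needs $\sup_{\Omega}|||D\phi|||^{2}<1-\kappa^{1/m}$), then $\sqrt{\det g}>\sqrt{\kappa}$ there, hence $\sqrt{\det g}>\sqrt{\kappa}$ on all of $\Omega\times[0,\tau]$, hence $|||Du|||^{2}<1-\kappa$ on $\Omega\times[0,\tau]$, using the inequalities recorded in Section~\ref{sec:Preliminaries}. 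This is the strict bound required, so $\tau=T$ and the proposition follows.

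\emph{Main obstacle.} The routine parts are the barrier computation and the bookkeeping of how small the parabolic $C^{2}$ norm of $\phi$ must be. The step that needs genuine care is the parabolic monotonicity of $\log\sqrt{\det g}$: one must either locate it in the literature on spacelike mean curvature flow or re-derive it, being attentive to the sign conventions, which differ from the Riemannian graph case. A secondary point is the continuity-in-time argument, forced by the fact that the spacelike hypothesis does not a priori give a bound as strong as $1-\kappa$; this is standard but should be stated explicitly, and it is why we need room at $t=0$, i.e. why $|||D\phi|||^{2}$ must be well below $1-\kappa^{1/m}$ rather than merely below $1-\kappa$.
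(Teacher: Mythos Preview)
Your proposal is correct and follows essentially the same route as the paper: a continuity/first-time argument in $t$, the parabolic barrier $S^{\pm}$ with the extra $|\partial\phi/\partial t|$ term, the boundary gradient bound, and then the interior propagation via the parabolic maximum principle applied to $\log\sqrt{\det g}$. The only notable difference is bibliographic: the inequality you flag as the main obstacle, $(d/dt-\Delta_{M_{t}})\log\sqrt{\det g}\ge 0$, is taken in the paper directly from Proposition~5.2 of \cite{LS2}, so no re-derivation is needed.
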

\begin{proof}
For $\phi$ small enough in $C^{2}$, we can assume that $\sup_{\Omega}|||Du(\cdot,0)|||^{2}=\sup_{\Omega}|||D\phi(\cdot,0)|||^{2} < 1 - \kappa^{1/m}<1-\kappa$. Suppose that there exists some first time $\epsilon \in (0,T]$ such that $|||Du(\cdot,\epsilon)|||^{2} = 1 - \kappa$ for some point in $\bar{\Omega}$. Then we have $|||Du|||^{2} \leq 1 - \kappa$ on $\bar{\Omega}\times[0,\epsilon]$. Now we take the linear parabolic operator $L = \partial/\partial t - g^{ij}(Du)\partial^{2}/\partial x^{i}\partial x^{j}$, and we define $S^{\pm}$ exactly as we did earlier (where $d$ is still a function of the space variables on $\bar{\Omega}$ only, independent of the time variable). We easily see that
\begin{eqnarray}
LS^{\pm}=\frac{\nu\zeta^{2}}{(1 + \zeta d)^{2}}g^{ij}\frac{\partial d}{\partial x^{i}}\frac{\partial d}{\partial x^{j}} \pm L\phi^{\gamma}  \geq   \frac{\nu\zeta^{2}}{(1 + \zeta \textrm{diam}\Omega)^{2}} -  \left| \frac{\partial \phi}{\partial t}\right| - \frac{m}{\kappa}|||D^{2}\phi||| \geq 0\nonumber
\end{eqnarray}
if we choose $\zeta = 1/\textrm{diam}\Omega$ and $\nu\zeta = 4\textrm{diam}\Omega\sup_{\Omega\times(0,T)}(|\partial\phi/\partial t| + m|||D^{2}\phi|||/\kappa)$. Then we can apply the parabolic maximum principle to the inequalities to again (exactly as in the maximal graph case) get $|||Du||| \leq \nu\zeta + 2|||D\phi|||$ on $\partial\Omega$. Then, for $\phi$ with small enough parabolic $C^{2}$ norm, we will have $\sup|||Du|||^{2} < 1 - \kappa^{1/m}$ on the parabolic boundary $\partial\Omega\times [0,\epsilon]\cup \Omega\times \{0\}$. Since we already know (by the definition of $\epsilon$) that $|||Du|||^{2} \leq 1 - \kappa$ on $\bar{\Omega}\times[0,\epsilon]$, we can use $\left(d/dt - \Delta_{M_{t}}\right)\log\sqrt{\det g} \geq 0$, which follows from the proof of Proposition 5.2 in \cite{LS2}. By applying the parabolic maximum principle to this inequality, we see that $|||Du|||^{2} < 1 - \kappa$ on $\bar{\Omega}\times [0,\epsilon]$. This contradicts the definition of $\epsilon$. Therefore, if the $C^{2}$ norm of $\phi$ is as small as described, the gradient estimate $|||Du|||^{2}<1 -\kappa$ will hold for all times for which the flow exists. 
\end{proof}
Paying closer attention to this proof, we see that $\sup_{\Omega}|||D\phi(\cdot,0)||| < \sqrt{1 - \kappa^{1/m}}$ and $4\textrm{diam}\Omega\sup_{\Omega\times(0,T)}(|\partial\phi/\partial t| + m|||D^{2}\phi|||/\kappa)+2\sup_{\partial\Omega}|||D\phi|||<\sqrt{1-\kappa^{1/m}}$ will be enough for the gradient estimate to hold.

\end{document}